\newtheorem{thm}{Theorem}[section]
\newtheorem{cor}[thm]{Corollary}
\newtheorem{lem}[thm]{Lemma}
\newtheorem{prop}[thm]{Proposition}
\newtheorem{rem}[thm]{Remark}
\theoremstyle{plain}
\theoremstyle{definition}
\newtheorem{defn}[thm]{Definition}
\theoremstyle{remark}
\newcommand\blfootnote[1]{%
	\begingroup
	\renewcommand\thefootnote{}\footnote{#1}%
	\addtocounter{footnote}{-1}%
	\endgroup
}
\def\ba{\begin{eqnarray*}}
\def\ea{\end{eqnarray*}}
\def\bee{\begin{equation}}
\def\ene{\end{equation}}
\title{A functional characterization of almost-greedy and partially-bases in Banach spaces}
\date{}
\begin{document}
	
			\author{P. M. Bern\'a}
		\address{Pablo M. Bern\'a
			\\
			Departamento de Matem\'atica Aplicada y Estad\'istica, Facultad de Ciencias Econ\'omicas y Empresariales, Universidad San Pablo-CEU, CEU Universities\\ Madrid, 28003 Spain.} \email{pablo.bernalarrosa@ceu.es}
		
			\author{D. Mond\'ejar}
		\address{Diego Mond\'ejar
			\\
			Departamento de Matem\'atica Aplicada y Estad\'istica, Facultad de Ciencias Econ\'omicas y Empresariales, Universidad San Pablo-CEU, CEU Universities\\ Madrid, 28003 Spain.} \email{diego.mondejarruiz@ceu.es}
		\maketitle

		\begin{abstract} 
		In 2003, S. J. Dilworth et al. (\cite{DKKT}) introduced the notion of almost-greedy (resp. partially-greedy) bases. These bases were characterized in terms of quasi-greediness and democracy (resp. conservativeness). In this paper we will show a new functional characterization of these type of bases in general Banach spaces following the spirit of the characterization of greediness proved in \cite{BB}.
			
	\end{abstract}
\blfootnote{\hspace{-0.031\textwidth} 2000 Mathematics Subject Classification. 46B15, 41A65.\newline
		\textit{Key words and phrases}: greedy algorithm, non-linear approximation; bases; Banach spaces.\newline
		The first author was supported by Grants PID2019-105599GB-I00 (Agencia Estatal de Investigación, Spain) and 20906/PI/18 from Fundaci\'on S\'eneca (Regi\'on de Murcia, Spain). The second author was supported by the research project PGC2018-098321-B-I00 (Ministerio de Ciencia, Innovación y Universidades, Spain). }
% % % % % % % % % % % % % % % % % % % %INTRODUCCIÃÂN

		%%%%%%%%%%%%%%%%%%%%%%%%%%%%%%%%%%%%%%%%%%

	\section{Introduction and background}
	
	Assume that $(\mathbb X,\Vert\cdot\Vert)$ is a Banach space over the field $\mathbb F=\mathbb R$ or $\mathbb C$. Throughout the paper, we assume that $\mathcal B=(e_n)_{n=1}^\infty$ is a semi-normalized Markushevich basis, that is, there exists a unique sequence $(e_n^*)_{n=1}^\infty\subset\mathbb X^*$ such that
	\begin{itemize}
		\item $\overline{\text{span}(e_n\, :\, n\in\mathbb N)}=\mathbb X$,
		\item $e_n^*(e_m)=\delta_{n,m}$,
		\item if $e_n^*(f)=0$ for all $n\in\mathbb N$, then $f=0$,
		\item there are $c_1,c_2>0$ such that
		$$0<c_1:=\inf_n\min \lbrace\Vert e_n\Vert,\Vert e_n^*\Vert\rbrace\leq \sup_n\max\lbrace\Vert e_n\Vert,\Vert e_n^*\Vert\rbrace=:c_2<\infty.$$
	\end{itemize} 
	Hereinafter, by a \textit{basis} for $\mathbb X$, we mean a semi-normalized Markushevich basis. Under these conditions, for each $f\in\mathbb X$, we have that $f\sim \sum_{n=1}^\infty e_n^*(f)e_n$ where $(e_n^*(f))_n\in c_0$. The support of $f\in\mathbb X$ is denoted by $\text{supp}(f)$, where $\text{supp}(f)=\lbrace n\in \mathbb N : \vert e_n^*(f)\vert\neq 0\rbrace$. Finally, we will use the following notation: $\mathbb X_{fin}$ is the subspace of $\mathbb X$ with the elements with finite support, if $f,g\in\mathbb X$, $f\cdot g=0$ means that $\text{supp}(f)\cap \text{supp}(g)=\emptyset$, $\tilde{f}=(e_n^*(f))_{n\in\mathbb N}$ and $\Vert \tilde{f}\Vert_\infty = \sup_{n\in\mathbb N}\vert e_n^*(f)\vert$. Moreover, if $A$ and $B$ are finite sets of natural numbers, $A<B$ means that $\max_{n\in A} n<\min_{j\in B} j$, $P_A$ is the projection operator, that is, $P_A(f)=\sum_{n\in A}e_n^*(f)e_n,$ and $S_k$ is the partial sum of order $k$, that is, $S_k(f)=P_{\lbrace 1,\cdots,k\rbrace}(f).$
	
	In 1999, S. V. Konyagin and V. N. Temlyakov (\cite{VT}) introduced one of the most studied algorithms in the field of non-linear approximation, the so called Thresholding Greedy Algorithm: for $f\in\mathbb X$ and $m\in\mathbb N$, we define a greedy sum of order $m$ as
	$$\mathcal G_m(f)[\mathbb X,\mathcal B]=\mathcal G_m(f):=\sum_{n\in A_m(f)} e_n^*(f)e_n,$$
	where $A_m(f)$ is a greedy set of order $m$, that is, $\vert A_m(f)\vert=m$ and
	$$\min_{n\in A_m(f)}\vert e_n^*(f)\vert\geq \max_{n\not\in A_m(f)}\vert e_n^*(f)\vert.$$
	
	The collection $(\mathcal G_m)_{m\in\mathbb N}$ is the Thresholding Greedy Algorithm. As for every algorithm, one of the first question that we can ask to the audience is when the algorithm converges. To solve that question, S. V. Konyagin and V. N. Temlyakov introduced in \cite{VT} the notion of quasi-greediness.
	
	\begin{defn}
		We say that $\mathcal B$ is quasi-greedy if there is a positive constant $C$ such that
		\begin{eqnarray}\label{quasi}
			\Vert f- \mathcal G_m(f)\Vert\leq C\Vert f\Vert,\;  \forall m\in\mathbb N, \forall f\in\mathbb X.
		\end{eqnarray}
		The least constant verifying \eqref{quasi} is denoted by $C_q=C_q[\mathbb X,\mathcal B]$ and we say that $\mathcal B$ is $C_q$-quasi-greedy.
	\end{defn}
	
	Although this definition only talks about the boundedness of the greedy sums, P. Wojtaszczyk showed in \cite{Woj} that quasi-greediness is equivalent to the convergence of the algorithm. 
	\begin{thm}[\cite{AABW,Woj}]
		A basis $\mathcal B$ in a Banach (or quasi-Banach) space is quasi-greedy if and only if
		\begin{eqnarray*}
			\lim_{m\to+\infty}\Vert f-\mathcal G_m(f)\Vert=0,\; \forall f\in\mathbb X.
		\end{eqnarray*}
	\end{thm}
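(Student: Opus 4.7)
The statement is an equivalence; I would treat the two directions separately, with the forward implication (quasi-greediness $\Rightarrow$ convergence) being the substantive one. For the reverse direction, pointwise convergence $\mathcal G_m(f)\to f$ gives $\sup_m\Vert f-\mathcal G_m(f)\Vert<\infty$ for each $f$; I would promote this pointwise bound to the uniform bound defining quasi-greediness by a gliding-hump/contradiction argument exploiting the positive homogeneity of $\mathcal G_m$ (greedy sets are invariant under nonzero scalar multiplication). A hypothetical failure of uniformity would yield normalized vectors $f_n$ with disjoint supports and indices $m_n$ satisfying $\Vert f_n-\mathcal G_{m_n}(f_n)\Vert\geq 4^n$, and then $f:=\sum_n 2^{-n}f_n$ would violate $\mathcal G_m(f)\to f$, a contradiction.

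\textbf{Forward direction.} My starting point is a self-similarity identity for the greedy sums. Fix $f\in\mathbb X$ and choose a consistent nested family of greedy sets $A_1(f)\subset A_2(f)\subset\cdots$. The residual $r_k:=f-\mathcal G_k(f)$ then satisfies $\mathcal G_{m-k}(r_k)=\mathcal G_m(f)-\mathcal G_k(f)$ for every $m\geq k$, so substituting and applying the quasi-greedy inequality to $r_k$ gives
\[
\Vert f-\mathcal G_m(f)\Vert=\Vert r_k-\mathcal G_{m-k}(r_k)\Vert\leq C_q\,\Vert f-\mathcal G_k(f)\Vert\qquad(m\geq k).
\]
Taking the limsup in $m$, we obtain $\limsup_m\Vert f-\mathcal G_m(f)\Vert\leq C_q\inf_k\Vert f-\mathcal G_k(f)\Vert$, so the problem reduces to exhibiting, for every $\eta>0$, some $k$ for which $\Vert f-\mathcal G_k(f)\Vert<\eta$. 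I would produce such $k$ using the density of $\mathbb X_{fin}$ in $\mathbb X$, which is built into the totality of the Markushevich basis.

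\textbf{The key estimate and main obstacle.} Given $\eta>0$, I would select $h\in\mathbb X_{fin}$ with $\Vert f-h\Vert<\eta'$, where $\eta'<\mu/(2c_2)$ and $\mu:=\min_{n\in\supp h}|e_n^*(h)|$. This choice forces $|e_n^*(f)|>\mu/2$ for $n\in\supp h$ and $|e_n^*(f)|<\mu/2$ otherwise, so $S:=\supp h$ is itself a greedy set for $f$; hence $\mathcal G_{|S|}(f)=P_S(f)$, and $P_S(h)=h$ gives
\[
\Vert f-\mathcal G_{|S|}(f)\Vert=\Vert(I-P_S)(f-h)\Vert\leq\Vert f-h\Vert+\Vert P_S(f-h)\Vert.
\]
The main obstacle is the second term: since $S$ is not in general a greedy set for $f-h$, the quasi-greedy inequality cannot be applied to $f-h$ with the set $S$ directly. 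I would bypass this by invoking a truncation/suppression form of quasi-greediness (equivalent to the definition in the Banach setting): the coefficients of $f-h$ on $S$ are $\ell_\infty$-bounded by $c_2\Vert f-h\Vert$, those of $h$ on $S$ are bounded below by $\mu$, and a scaling argument then delivers an estimate of the form $\Vert P_S(f-h)\Vert\leq K(h)\Vert f-h\Vert$, with $K(h)$ depending on $h$ only through $\mu$. Taking $\Vert f-h\Vert\to 0$ with $h$ kept fixed makes $\Vert f-\mathcal G_{|S|}(f)\Vert$ arbitrarily small, closing the argument.
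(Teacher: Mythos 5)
Note first that the paper does not prove this theorem — it states it with a citation to \cite{AABW,Woj} — so I am evaluating your argument on its own merits rather than against a proof in the paper. Your reverse direction (convergence $\Rightarrow$ uniform bound via gliding hump) is sketched in the right spirit, but as written the choice $f=\sum_n 2^{-n}f_n$ with $\|f_n\|=1$ does not force the coefficient magnitudes of the blocks $2^{-n}f_n$ to be nested, which is what is needed for the greedy algorithm on $f$ to exhaust block $n$ before touching block $n+1$; the scalars in front of $f_n$ must instead be chosen so that $\min_{j\in\supp f_n}|$coeff$|$ dominates $\max_{j\in\supp f_{n+1}}|$coeff$|$, and one then has to re-check that the resulting series converges while the rescaled residuals still diverge. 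This is repairable with care, but it is where the work in that direction actually lies.

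The genuine gap is in the forward direction, specifically in your ``key estimate.'' The self-similarity reduction — $\|f-\mathcal G_m(f)\|\le C_q\|f-\mathcal G_k(f)\|$ for $m\ge k$ along a nested family of greedy sets, so it suffices to show $\inf_k\|f-\mathcal G_k(f)\|=0$ — is correct and a nice simplification. But your certificate that the infimum is $0$ requires an $h\in\mathbb X_{fin}$ satisfying simultaneously $\|f-h\|<\mu(h)/(2c_2)$ (so that $\supp h$ is a greedy set for $f$) and $\|f-h\|$ small, and these two demands are in general mutually exclusive. Concretely, take $\mathbb X=\ell_1$ with the canonical basis ($c_1=c_2=1$) and $f=\sum_{n\ge1}n^{-2}e_n$. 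Your constraint forces $\supp h=\{n:|e_n^*(f)|>\mu(h)/2\}=\{1,\dots,N\}$ for some $N$, whence $\|f-h\|\ge\sum_{n>N}n^{-2}\ge\tfrac{1}{N+1}$; on the other hand $\mu(h)\le|e_N^*(h)|\le N^{-2}+\|f-h\|<N^{-2}+\mu(h)/2$ gives $\mu(h)<2N^{-2}$ and hence $\|f-h\|<N^{-2}$, contradicting $\|f-h\|\ge\tfrac{1}{N+1}$ for every $N\ge2$ (and $N=1$ is ruled out by a short direct computation). So for this $f$ there is \emph{no} admissible $h$, and the key estimate never produces a single small residual. (Your closing phrase ``taking $\|f-h\|\to0$ with $h$ kept fixed'' is also internally inconsistent: fixing $h$ fixes $\|f-h\|$.) The standard proof takes an arbitrary finitely supported $g$ with $\|f-g\|$ small, makes no attempt to have $\supp g$ be a greedy set of $f$, and instead controls $\|(I-P_{A_m(f)})(f)\|$ for large $m$ through the uniform boundedness of the truncation operators $T_\alpha$ (Lemma \ref{trunc} of this paper); that is the genuinely different ingredient your sketch gestures at but does not deploy in a way that closes the argument.
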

	
	Then, quasi-greediness is the minimal condition in the convergence of the algorithm, but we are interested in others types of convergence. For instance, when does the algorithm produce the best possible approximation? To study this question, in \cite{VT}, the authors introduced the notion of greediness: a basis $\mathcal B$ is greedy if there is a positive constant $C_g$ such that
	$$\Vert f-\mathcal G_m(f)\Vert\leq C_g\inf\lbrace \Vert f-\sum_{n\in B}a_ne_n\Vert: a_n\in\mathbb F, \vert B\vert\leq m\rbrace,\;\; \forall m\in\mathbb N, \forall f\in\mathbb X.$$	
	
	There are several examples of greedy bases, for instance the canonical basis in the spaces $\ell_p$ with $1\leq p<\infty$, the Haar system in $L_p((0,1))$ with $1<p<\infty$ or the trigonometric system in $L_2(\mathbb T)$. To study greedy bases, S. V. Konyagin and V. N. Temlyakov gave a characterization in terms of unconditional and democratic bases, where a basis is unconditional if the projection operator is uniformly bounded, that is, there is $K>0$ such that, for any finite set $A$,
	$$\Vert P_A(f)\Vert\leq K\Vert f\Vert,\; \forall f\in\mathbb X.$$
	
	Consider $A$ a finite set and define the set of the collection of signs in $A$, $\mathcal E_A=\lbrace \varepsilon=(\varepsilon_n)_{n\in A}: \vert\varepsilon_n\vert=1\rbrace,$ and take the indicator sum
	$$1_{\varepsilon A}=1_{\varepsilon A}[\mathbb X,\mathcal B]:=\sum_{n\in A}\varepsilon_n e_n.$$
	If $\varepsilon\equiv 1$, we will use the notation $1_A$.
	\begin{defn}{\cite{AABW,BBL,DKKT,DKOSS}}
		We say that $\mathcal B$ is symmetric for largest coefficients if there is a positive constant $C$ such that
		\begin{eqnarray}\label{sym}
			\Vert f+1_{\varepsilon A}\Vert\leq C\Vert f+1_{\varepsilon' B}\Vert,
		\end{eqnarray}
		for any pair of sets $\vert A\vert\leq\vert B\vert<\infty$, $A\cap B=\emptyset$, for any $f\in\mathbb X$ such that $\text{supp}(f)\cap (A\cup B)=\emptyset$, $\vert e_n^*(f)\vert\leq 1$ for all $n\in\mathbb N$ and for any choice of signs $\varepsilon\in\mathcal E_A$, $\varepsilon'\in\mathcal E_B$.
		The least constant verifying \eqref{sym} is denoted by $\Delta=\Delta[\mathbb X,\mathcal B]$ and we say that $\mathcal B$ is $\Delta$-symmetric for largest coefficients.
		If \eqref{sym} is satisfied with the extra condition that $A<\text{supp}(f)\cup B$, then we say that $\mathcal B$ is partially symmetric for largest coefficients with constant $\Delta_{pc}$.
	\end{defn}
	
	\begin{defn}
		We say that $\mathcal B$ is super-democratic if there is a positive constant $C$ such that
		\begin{eqnarray}\label{super}
			\Vert 1_{\varepsilon A}\Vert \leq C\Vert 1_{\eta B}\Vert,
		\end{eqnarray}
		for any pair of sets $A, B\subset \mathbb N$, $\vert A\vert\leq\vert B\vert<\infty$ and for any choice of signs $\varepsilon\in\mathcal E_A, \eta\in\mathcal E_B$. The least constant verifying \eqref{super} is denoted by $\Delta_s=\Delta_s[\mathbb X,\mathcal B]$ and we say that $\mathcal B$ is $\Delta_s$-super-democratic.
		
		If \eqref{super} is satisfied for  $A<B$, we say that $\mathcal B$ is $\Delta_{sc}$-super-conservative.
		
		If \eqref{super} is satisfied for  $\varepsilon\equiv\eta\equiv 1$, we say that $\mathcal B$ is $\Delta_d$-democratic and if in addition, $A<B$, we say that $\mathcal B$ is $\Delta_c$-conservative.
		
	\end{defn}

	With these definitions, we can find the following characterizations of greedy bases.
	\begin{thm}
		Assume that $\mathcal B$ is a basis in a Banach space $\mathbb X$.
		\begin{itemize}
			\item $\mathcal B$ is greedy if and only if $\mathcal B$ is democratic and unconditional (see \cite{VT}). Moreover,
			$$\max\lbrace K,\Delta_d\rbrace\leq C_g\leq K+K^2\Delta_d.$$
			\item $\mathcal B$ is greedy if and only if $\mathcal B$ is super-democratic and unconditional (see \cite{BBG}). Moreover,
			$$\max\lbrace K,\Delta_s\rbrace\leq C_g\leq K+K\Delta_s.$$
			\item $\mathcal B$ is greedy if and only if $\mathcal B$ is symmetric for largest coefficients and unconditional (see \cite{DKOSS}). Moreover,
			$$\max\lbrace K,\Delta_d\rbrace\leq C_g\leq K\Delta.$$
		\end{itemize}
	\end{thm}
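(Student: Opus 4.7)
The theorem gathers three parallel equivalences, each characterising greediness as unconditionality plus some flavour of democracy, with explicit constants. My plan treats all three bullets in parallel, since both the necessity direction and the decomposition at the heart of sufficiency are common; only the final bound on the residual term distinguishes the three constants.

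For necessity, I would establish the lower bounds by testing the greedy inequality against carefully chosen auxiliary elements. For the democracy-type constants, given disjoint finite $A,B$ with $|A|\leq|B|$ and signs $\varepsilon\in\mathcal E_A$, $\eta\in\mathcal E_B$, consider $f_t=1_{\varepsilon A}+t\,1_{\eta B}$ with $t>1$: every coefficient on $B$ strictly exceeds every coefficient on $A$, so the greedy set of order $|B|$ is $B$ and $f_t-\mathcal G_{|B|}(f_t)=1_{\varepsilon A}$. Applying the greedy inequality with the competing approximation $1_{\varepsilon A}$ (of support size $|A|\leq|B|$) gives $\Vert 1_{\varepsilon A}\Vert\leq C_g\,t\Vert 1_{\eta B}\Vert$, and letting $t\to 1^+$ yields $\Delta_s\leq C_g$ and hence $\Delta_d\leq C_g$; adding a free summand $f$ supported outside $A\cup B$ with $\Vert\tilde f\Vert_\infty\leq 1$ and running the analogous argument gives $\Delta\leq C_g$. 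The bound $K\leq C_g$ for unconditionality is more delicate: one perturbs the coefficients of $f$ on a prescribed set so that the greedy algorithm is forced to select that set, and then combines the resulting greedy inequality with the democracy already obtained to control the projection.

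For sufficiency, I would fix $f\in\mathbb X_{fin}$, $m\in\mathbb N$, a greedy set $A=A_m(f)$ of order $m$, and any competitor $g=\sum_{n\in B}a_ne_n$ with $|B|\leq m$. The basic decomposition is
$$f-\mathcal G_m(f)=\bigl[f-P_{A\cup B}(f)\bigr]+P_{B\setminus A}(f).$$
The first bracket equals $P_{(A\cup B)^c}(f-g)$ because $\text{supp}(g)\subset B\subset A\cup B$, and is therefore bounded by $K\Vert f-g\Vert$ by unconditionality. For the residual term, the greedy property provides $|e_n^*(f)|\leq\min_{j\in A\setminus B}|e_j^*(f)|$ for every $n\in B\setminus A$ and $|B\setminus A|\leq|A\setminus B|$; dominating $P_{B\setminus A}(f)$ coefficient-wise by a multiple of $1_{\varepsilon(B\setminus A)}$, passing to $1_{\eta(A\setminus B)}$ via the relevant democracy-type hypothesis, and reabsorbing into $P_{A\setminus B}(f-g)$ (since $A\setminus B$ is disjoint from $\text{supp}(g)$) finally bounds this term by a constant multiple of $K\Vert f-g\Vert$.

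The main obstacle is the constant bookkeeping in this last step, because each form of democracy permits a different amount of simplification. The plain democratic hypothesis forces an auxiliary unconditional projection to handle signs, contributing the second $K$ factor and producing $K+K^2\Delta_d$; super-democracy incorporates sign freedom intrinsically and avoids that step, yielding $K+K\Delta_s$; finally, symmetry for largest coefficients allows both halves of the decomposition to be compared against $f-g$ within a single symmetric estimate, giving the sharpest bound $K\Delta$. Taking infima over $g$ and extending from $\mathbb X_{fin}$ to $\mathbb X$ by density completes the proof.
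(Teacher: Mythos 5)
The paper does not prove this theorem; it is quoted as background from \cite{VT}, \cite{BBG} and \cite{DKOSS}. So there is no internal argument to compare against, and I will assess your sketch on its own terms against the standard proofs in those references.

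Your sufficiency outline is the right one: the decomposition $f-\mathcal G_m(f)=P_{(A\cup B)^c}(f)+P_{B\setminus A}(f)$, absorbing the first term by unconditionality via $P_{(A\cup B)^c}(f)=P_{(A\cup B)^c}(f-g)$, then comparing $P_{B\setminus A}(f)$ with $P_{A\setminus B}(f-g)$ through the democracy-type hypothesis using the greedy-set inequality $\max_{n\in B\setminus A}|e_n^*(f)|\le\min_{j\in A\setminus B}|e_j^*(f)|$ and $|B\setminus A|\le|A\setminus B|$, is exactly the scheme of \cite{VT}, \cite{BBG} and \cite{DKOSS}. Your necessity argument for the democracy constants, taking $f_t=1_{\varepsilon A}+t\,1_{\eta B}$ with $t\to1^+$ and the competitor $1_{\varepsilon A}$, is also correct and gives $\Delta_s\le C_g$ and $\Delta\le C_g$ cleanly (for $\Delta$ you include the free summand $f$ as you say).

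Two points you should tighten. First, the necessity of unconditionality does not need democracy; the statement that one ``combines the resulting greedy inequality with the democracy already obtained'' is off-track. The standard argument is a pure perturbation: for finite $A$ and large $\lambda$, set $h_\lambda=P_{A^c}(f)+\lambda\,1_{\eta A}$ with $\eta$ the sign pattern of $f$ on $A$, so $A$ is the greedy set of order $|A|$ for $h_\lambda$; taking the competitor $\lambda\,1_{\eta A}-P_A(f)$ (supported on $A$) gives $\Vert f-P_A(f)\Vert=\Vert h_\lambda-\mathcal G_{|A|}(h_\lambda)\Vert\le C_g\Vert f\Vert$, hence $K\le C_g$ directly. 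Second, you have flagged the constant bookkeeping as ``the main obstacle'' but not actually carried it out, and this is where the three bullets genuinely differ: passing from $P_{B\setminus A}(f)$ to $1_{B\setminus A}$ and back to $P_{A\setminus B}(f-g)$ costs two unconditionality applications in the plain democratic case (hence $K^2\Delta_d$), while super-democracy absorbs the sign freedom and saves one factor of $K$, and SLC permits a single comparison producing $K\Delta$. Without writing those estimates explicitly (and keeping track of whether the lattice/multiplier bounds used are stated with constant $K$ or $2K$), the claimed bounds $K+K^2\Delta_d$, $K+K\Delta_s$, $K\Delta$ are asserted rather than derived.
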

	
	The last two characterizations were studied with the objective to improve the boundedness constant of greedy bases. Moreover, all the characterizations were given under the assumption of unconditionality and one of the democracy-like properties but, in \cite{BB}, we find a new and interesting property that is so useful to give a new characterization of greediness (see \cite[Corollary 1.8]{BB}). This property is the so called Property (Q): there is a $C>0$ such that
	$$\Vert f+1_{A}\Vert\leq C \Vert f+g+1_{B}\Vert,$$
	for any $\vert A\vert=\vert B\vert<\infty$, $A\cap B=\emptyset$ and $f,g\in\mathbb X_{fin}$ such that $\text{supp}(f)\cap \text{supp}(g)=\emptyset$, $\Vert\tilde{f}\Vert_\infty\leq 1$ and $\text{supp}(f+g)\cap (A\cup B)=\emptyset$.
	
	In that paper, we focus our attention in a closed inequality to characterize the so called almost-greedy and partially-greedy bases.
	
	\begin{defn}[\cite{DKKT}]
		We say that $\mathcal B$ is almost-greedy if there is a positive constant $C$ such that
		\begin{eqnarray}\label{alm}
			\Vert f-\mathcal G_m(f)\Vert\leq C\inf\lbrace \Vert f-P_B(f)\Vert:  \vert B\vert\leq m\rbrace,\;\; \forall m\in\mathbb N, \forall f\in\mathbb X.
		\end{eqnarray}	
		The least constant verifying \eqref{alm} is denoted by $C_{al}=C_{al}[\mathbb X,\mathcal B]$ and we say that $\mathcal B$ is $C_{al}$-almost-greedy.
	\end{defn}
	
	\begin{defn}[\cite{BBL,DKKT}]
		We say that $\mathcal B$ is partially-greedy if there is positive constant $C$ such that
		\begin{eqnarray}\label{partially}
			\Vert f-\mathcal G_m(f)\Vert \leq C\inf_{k\leq m}\Vert f-S_k(f)\Vert,\; \forall m\in\mathbb N, \forall f\in\mathbb X.
		\end{eqnarray}
		The least constant verifying \eqref{partially} is denoted by $C_p=C_p[\mathbb X,\mathcal B]$ and we say that $\mathcal B$ is $C_p$-partially-greedy.
	\end{defn}
	
	\begin{rem}
		In \cite{DKKT}, the condition of partially-greediness was introduced as follows:
		\begin{eqnarray}\label{pa}
			\Vert f-\mathcal G_m(f)\Vert \leq C\Vert f-S_m(f)\Vert,\; \forall m\in\mathbb N, \forall f\in\mathbb X.
		\end{eqnarray}
		Under the condition of Schauder bases, \eqref{pa} and \eqref{partially} are equivalent notions and in \cite{BBL}, the authors proved that if \eqref{pa} is satisfied with $C=1$, then the basis is partially-greedy.
	\end{rem}
	Of course, every greedy basis is almost-greedy and every almost-greedy basis is partially-greedy. One example of an almost-greedy basis that is not greedy is the Lindestrauss basis in $\ell_1$ (\cite{GHO}). Recently, one basis that is partially-greedy and not almost-greedy is presented in \cite[Proposition 6.10]{BDKOW}.  
	
	It is well known that a basis is almost-greedy if and only if the basis is quasi-greedy and democratic and a basis is partially-greedy if the basis is quasi-greedy and conservative (\cite{DKKT,BDKOW}). Moreover, as for greedy bases, we have the following characterizations.
	\begin{thm}
		Assume that $\mathcal B$ is a basis in a Banach space. 
		\begin{itemize}
			\item $\mathcal B$ is almost-greedy if and only if $\mathcal B$ is democratic and quasi-greedy (\cite{DKKT}). Moreover,
			$$\max\lbrace C_q,\Delta_d\rbrace\leq C_{al}\leq 8C_q^4\Delta_d+C_q+1.$$
			\item$\mathcal B$ is almost-greedy if and only if $\mathcal B$ is super-democratic and quasi-greedy (\cite{BBG}). Moreover,
			$$\max\lbrace C_q,\Delta_s\rbrace\leq C_{al}\leq C_q+C_q\Delta_s.$$
			\item $\mathcal B$ is almost-greedy if and only if $\mathcal B$ is symmetric for largest coefficients and quasi-greedy (\cite{BBG}). Moreover,
			$$\max\lbrace C_q,\Delta\rbrace\leq C_{al}\leq C_q\Delta.$$
		\end{itemize}
	\end{thm}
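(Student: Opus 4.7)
Each equivalence consists of an easy necessity direction (where I extract $C_q$ and the democracy constants from \eqref{alm} by testing on well-chosen vectors) and a sufficiency direction (where I bound $\|f-\mathcal G_m(f)\|$ by $\|f-P_B(f)\|$ using quasi-greediness together with the appropriate democracy hypothesis). Taking $B=\emptyset$ in \eqref{alm} immediately yields $C_q\le C_{al}$. For the democracy-type lower bounds, given disjoint finite $A,B$ with $|A|\le|B|$ and signs $\varepsilon\in\mathcal E_A$, $\eta\in\mathcal E_B$, I would test \eqref{alm} on $f=1_{\eta B}+(1-\delta)1_{\varepsilon A}$ with small $\delta>0$. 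Since $\mathcal G_{|B|}(f)=1_{\eta B}$ and $f-P_A(f)=1_{\eta B}$, choosing $B'=A$ (admissible because $|A|\le|B|$) gives $(1-\delta)\|1_{\varepsilon A}\|\le C_{al}\|1_{\eta B}\|$, and $\delta\to 0^+$ delivers $\Delta_d\le\Delta_s\le C_{al}$. Augmenting the test vector by an extra term $h$ with $\|\tilde h\|_\infty\le 1$ and support disjoint from $A\cup B$ furnishes $\Delta\le C_{al}$.

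\textbf{Sufficiency -- decomposition.} Fix $f\in\mathbb X$, $m\in\mathbb N$, and $B$ with $|B|\le m$; set $A=A_m(f)$ and $\alpha=\min_{n\in A}|e_n^*(f)|$. The plan is to split
$$f-\mathcal G_m(f)=\bigl(f-P_{A\cup B}(f)\bigr)+P_{B\setminus A}(f)$$
and control each summand by a constant multiple of $\|f-P_B(f)\|$. Since $A$ is a greedy set for $f$, one has $|e_n^*(f)|\ge\alpha$ on $A\setminus B$ and $|e_n^*(f)|\le\alpha$ on $(A\cup B)^c$, so $A\setminus B$ is itself a greedy set of order $|A\setminus B|$ for the vector $f-P_B(f)$. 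Quasi-greediness applied to $f-P_B(f)$ with this greedy set then yields
$$\|f-P_{A\cup B}(f)\|=\|(f-P_B(f))-P_{A\setminus B}(f)\|\le C_q\|f-P_B(f)\|,$$
handling the first summand uniformly in all three cases.

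\textbf{Sufficiency -- second summand and transfer.} For $P_{B\setminus A}(f)$ I would invoke the truncation machinery available for $C_q$-quasi-greedy bases (Wojtaszczyk), which relates $\|P_{B\setminus A}(f)\|$ to the signed indicator $\|\alpha\,1_{\varepsilon(B\setminus A)}\|$ (with $\varepsilon_n=\text{sgn}(e_n^*(f))$) up to universal powers of $C_q$. Since $|B\setminus A|\le|A\setminus B|$, the chosen democracy hypothesis converts the indicator over $B\setminus A$ into one over $A\setminus B$, and a further truncation step bounds the latter by $\|P_{A\setminus B}(f)\|$, which in turn is $\le(1+C_q)\|f-P_B(f)\|$ by the previous paragraph. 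Combining these estimates with the bound on the first summand and tracking constants should recover $C_{al}\le 8C_q^4\Delta_d+C_q+1$, $C_{al}\le C_q+C_q\Delta_s$, and $C_{al}\le C_q\Delta$ in the three cases respectively.

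\textbf{Main obstacle.} The hard part will be the bookkeeping of constants in the transfer from $B\setminus A$ to $A\setminus B$ through signed indicator sums. In the super-democratic and symmetric-for-largest-coefficients formulations signs are already incorporated into the hypothesis, so the truncation steps pass cleanly and the bounds $C_q+C_q\Delta_s$ and $C_q\Delta$ come out essentially optimal; in the purely democratic case signs must be averaged out by further applications of quasi-greediness, which is the source of the extra $C_q^4$ factor appearing in the first bound.
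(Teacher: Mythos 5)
This theorem appears in the paper only as cited background (to \cite{DKKT} and \cite{BBG}); there is no in-paper proof for you to be compared against, so I evaluate your sketch on its own merits.

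Your necessity direction is sound: $B=\emptyset$ gives $C_q\le C_{al}$; testing on $(1-\delta)1_{\varepsilon A}+1_{\eta B}$ and letting $\delta\to 0^+$ gives $\Delta_s\le C_{al}$ (hence $\Delta_d\le C_{al}$); adding a perturbation $h$ with $\Vert\tilde{h}\Vert_\infty\le 1$ supported off $A\cup B$ gives $\Delta\le C_{al}$ by continuity of the norm. One minor point worth saying aloud: when $h$ has coefficients of modulus $1$, the greedy set of order $\vert B\vert$ need not be unique, but $B$ is still \emph{a} greedy set, which is all \eqref{alm} needs.

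For sufficiency, the split $f-\mathcal G_m(f)=(f-P_{A\cup B}(f))+P_{B\setminus A}(f)$ with $\Vert f-P_{A\cup B}(f)\Vert\le C_q\Vert f-P_B(f)\Vert$ is correct, and your chain for the second summand (convexity to $\alpha\Vert 1_{\varepsilon(B\setminus A)}\Vert$, democracy to $\alpha\Vert 1_{\eta(A\setminus B)}\Vert$, truncation back to $\Vert P_{A\setminus B}(f)\Vert$, quasi-greediness to $\Vert f-P_B(f)\Vert$) does establish the equivalence qualitatively. But it will not recover the stated constants: already $\alpha\Vert 1_{\eta(A\setminus B)}\Vert\le C_q\Vert P_{A\setminus B}(f)\Vert\le C_q(1+C_q)\Vert f-P_B(f)\Vert$ overshoots $C_q\Delta$ and $C_q+C_q\Delta_s$, so ``tracking constants should recover'' is not justified. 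The missing idea, used repeatedly by the paper in Theorems \ref{p1}, \ref{new} and \ref{main}, is \emph{not} to peel off $P_{B\setminus A}(f)$ alone. Keep $g:=P_{(A\cup B)^c}(f)$ in place and invoke Lemma \ref{convex1}(ii) with this $g$, so that
$$\Vert f-\mathcal G_m(f)\Vert\le\alpha\sup_{\varepsilon}\left\Vert \tfrac{1}{\alpha}P_{(A\cup B)^c}(f)+1_{\varepsilon(B\setminus A)}\right\Vert,$$
then apply the democracy-type hypothesis directly to compare $\tfrac{1}{\alpha}P_{(A\cup B)^c}(f)+1_{\varepsilon(B\setminus A)}$ with $\tfrac{1}{\alpha}P_{(A\cup B)^c}(f)+1_{\eta(A\setminus B)}$, $\eta_n=\sgn e_n^*(f)$, and finally observe that $P_{(A\cup B)^c}(f)+\alpha\,1_{\eta(A\setminus B)}=T_\alpha\bigl(f-P_B(f)\bigr)$ exactly, so Lemma \ref{trunc} closes the argument with a single factor $C_q$. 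In the symmetric-for-largest-coefficients case this yields $C_{al}\le C_q\Delta$ on the nose. This identification of the combined right-hand side with a truncation of $f-P_B(f)$ is the step your sketch leaves out.
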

	
	\begin{thm}{\cite{BBL,DKKT}}\label{mp}
		Assume that $\mathcal B$ is a basis in a Banach space. 
		\begin{itemize}
			\item $\mathcal B$ is partially-greedy if and only if $\mathcal B$ is conservative and quasi-greedy. Moreover,
			$$\max\lbrace C_q,\Delta_c\rbrace\leq C_{p}\leq C_q+C_q^2(1+C_q)\Delta_c.$$
			\item $\mathcal B$ is partially-greedy if and only if $\mathcal B$ is super-conservative and quasi-greedy. Moreover,
			$$\max\lbrace C_q,\Delta_{sc}\rbrace\leq C_{p}\leq C_q+C_q(1+C_q)\Delta_{sc}.$$
			\item $\mathcal B$ is partially-greedy if and only if $\mathcal B$ is partially-symmetric for largest coefficients and quasi-greedy. Moreover,
			$$\max\lbrace C_q,\Delta_{pc}\rbrace\leq C_{p}\leq C_q\Delta_{pc}.$$
		\end{itemize}
	\end{thm}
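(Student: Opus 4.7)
The plan is to treat both directions of each characterization in parallel, following the strategies of \cite{BBL,DKKT}. For the \emph{necessity} of each bullet, taking $k=0$ in \eqref{partially} (with $S_0(f)=0$) reduces it to \eqref{quasi}, giving $C_q\le C_p$. The democracy-type lower bounds $\Delta_c,\Delta_{sc},\Delta_{pc}\le C_p$ come from a test-vector argument: given finite $A<B$ with $|A|\le|B|$ and signs $\varepsilon\in\mathcal E_A$, $\eta\in\mathcal E_B$, pick a ``filler'' set $Z<A$ disjoint from $A\cup B$ with $|Z|+|B|\ge\max A$, and take
$$f=M\cdot 1_Z+1_{\varepsilon A}+(1+\delta)\,1_{\eta B}$$
for $M$ large and $\delta>0$ small. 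The greedy set of order $m:=|Z|+|B|$ is $Z\cup B$, so $f-\mathcal G_m(f)=1_{\varepsilon A}$; choosing $k=\max A\le m$ gives $f-S_k(f)=(1+\delta)\,1_{\eta B}$. Letting $\delta\to 0^+$ in \eqref{partially} yields $\|1_{\varepsilon A}\|\le C_p\|1_{\eta B}\|$, and specializing $\varepsilon,\eta$ recovers each of the three democracy-type lower bounds.

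For the \emph{sufficiency}, fix $f$, $m$, a greedy set $A=A_m(f)$ with $\alpha=\min_{n\in A}|e_n^*(f)|$, and $k\le m$. Write $A=A^-\sqcup A^+$ with $A^-=A\cap\{1,\dots,k\}$, $A^+=A\cap(k,\infty)$, and set $B=\{1,\dots,k\}\setminus A$, $D=(k,\infty)\setminus A^+$; then $B<A^+$, $|B|=k-|A^-|\le m-|A^-|=|A^+|$, and
$$f-\mathcal G_m(f)=P_B(f)+P_D(f),\qquad f-S_k(f)=P_{A^+}(f)+P_D(f).$$
For the first two bullets, rewrite $f-\mathcal G_m(f)=(I-P_{A^+})(f-S_k(f))+P_B(f)$; since $A^+$ is a greedy set for $f-S_k(f)$, quasi-greediness yields $\|(I-P_{A^+})(f-S_k(f))\|\le C_q\|f-S_k(f)\|$, reducing the task to estimating $\|P_B(f)\|$. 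Standard quasi-greedy truncation lemmas give $\|P_B(f)\|\lesssim C_q\alpha\|1_{\eta_B B}\|$ and $\alpha\|1_{\eta_{A^+}A^+}\|\lesssim\|f-S_k(f)\|$, where $\eta_{X,n}=\sgn(e_n^*(f))$. One then links the two indicator sums via \eqref{super}: directly in the super-conservative case (since \eqref{super} accepts different signs on $B$ and $A^+$), and after a quasi-greedy sign-equivalence step $\|1_{\eta X}\|\lesssim C_q\|1_X\|$ in the conservative case, paying one extra power of $C_q$. Careful bookkeeping yields $C_p\le C_q+C_q(1+C_q)\Delta_{sc}$ and $C_p\le C_q+C_q^2(1+C_q)\Delta_c$.

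The third bullet, $C_p\le C_q\Delta_{pc}$, needs a sharper one-shot argument. Write $P_B(f)=\alpha\sum_{n\in B}\lambda_n\varepsilon_{B,n}e_n$ with $\lambda_n=|e_n^*(f)|/\alpha\in[0,1]$, and apply a Bernoulli-averaging extension of \eqref{sym} --- by which $1_{\varepsilon A}$ there may be replaced at no extra constant by any $\sum_{n\in A}\lambda_n\varepsilon_n e_n$ with $\lambda_n\in[0,1]$, using $\lambda_n\varepsilon_n=\mathbb E[\xi_n\varepsilon_n]$ for independent $\xi_n\sim\mathrm{Bernoulli}(\lambda_n)$ and convexity of the norm --- to the indicator pair $(B,A^+)$ with auxiliary vector $P_D(f)/\alpha$. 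All hypotheses hold (in particular $B<\supp(P_D(f)/\alpha)\cup A^+$, because $B\subset\{1,\dots,k\}$ while $D\cup A^+\subset(k,\infty)$). Scaling by $\alpha$ gives
$$\|f-\mathcal G_m(f)\|=\|P_D(f)+P_B(f)\|\le\Delta_{pc}\|P_D(f)+\alpha\,1_{\eta_{A^+}A^+}\|;$$
since $P_D(f)+\alpha\,1_{\eta_{A^+}A^+}=T_\alpha(f-S_k(f))$, the truncation bound $\|T_\alpha(\cdot)\|\le C_q\|\cdot\|$ gives $\|f-\mathcal G_m(f)\|\le C_q\Delta_{pc}\|f-S_k(f)\|$. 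Taking infimum over $k\le m$ completes the third bullet. The main obstacle is recognizing this Bernoulli averaging trick: without it, $P_B(f)$ would require an intermediate truncation to become an indicator sum, costing an extra $C_q$ factor and yielding only $C_p\lesssim C_q+C_q\Delta_{pc}$.
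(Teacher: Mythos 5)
The paper does not actually prove Theorem~\ref{mp}; it quotes it from \cite{BBL,DKKT} as background, and the paper's own machinery approaches the partially-greedy characterization through the Properties (F$_p$), (F$_p^*$) (Theorems~\ref{new} and~\ref{maintwo}) rather than through this classical decomposition. Your reconstruction follows the classical route, and the sufficiency half is essentially right: the decomposition $f-\mathcal G_m(f)=P_B(f)+P_D(f)$, $f-S_k(f)=P_{A^+}(f)+P_D(f)$ is the standard one, the observation that $A^+$ is a greedy set for $f-S_k(f)$ is correct, and the Bernoulli-averaging step for the third bullet (which is exactly the content of Lemma~\ref{convex1}(i) applied to the ``basis'' $(\varepsilon_n e_n)_{n\in B}$) is precisely what avoids an extra truncation and yields the sharp $C_p\le C_q\Delta_{pc}$.

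There are, however, two concrete problems in the necessity half. First, the filler condition $Z<A$ is in general impossible to satisfy: you need $Z\subset\{1,\dots,\min A-1\}$ with $|Z|\ge\max A-|B|$, which fails whenever $|B|<\max A-\min A+1$ (e.g.\ $A=\{1,100\}$, $B=\{101,102\}$). The correct requirement is merely $Z\subset\{1,\dots,\max A\}\setminus A$, which always has room since $\max A-|A|\ge\max A-|B|$; with $k=\max A$ one still gets $S_k(F)=M1_Z+1_{\varepsilon A}$ and the argument goes through. Second, and more seriously, the claim that ``specializing $\varepsilon,\eta$ recovers each of the three democracy-type lower bounds'' is false for $\Delta_{pc}$: partial symmetry for largest coefficients requires an estimate $\|g+1_{\varepsilon A}\|\le C\|g+1_{\eta B}\|$ with a nonzero perturbation $g$ inside both norms, and specializing signs in $\|1_{\varepsilon A}\|\le C_p\|1_{\eta B}\|$ cannot produce that. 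You must instead enrich the test vector to $F=M\,1_Z+1_{\varepsilon A}+g+(1+\delta)1_{\eta B}$ with $\|\tilde g\|_\infty\le1$ and $\supp(g)\subset(\max A,\infty)\setminus B$; then $Z\cup B$ is still a greedy set of order $m=|Z|+|B|$, $F-S_{\max A}(F)=g+(1+\delta)1_{\eta B}$, and letting $\delta\to0^+$ gives $\Delta_{pc}\le C_p$ (from which $\Delta_{sc}\le C_p$ and $\Delta_c\le C_p$ follow as special cases, rather than the other way around).
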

	
	The purpose of this paper is to get a new characterization of almost-greedy and partially-greedy bases following the ideas of \cite[Corollary 1.8]{BB} for greedy bases.
	
	\begin{defn}
		We say that $\mathcal B$ has the Property (F) if there is a positive constant $C$ such that
		\begin{eqnarray}\label{efe}
			\Vert f+1_{A}\Vert\leq C \Vert f+g+1_{B}\Vert,
		\end{eqnarray}
		for any $A, B, f, g$ satisfying the following conditions:
		\begin{itemize}
			\item[i)] $\vert A\vert\leq\vert B\vert<\infty$ and $A\cap B=\emptyset$, 
			\item[ii)] $f,g\in\mathbb X_{fin}$, $f\cdot g=0$, $\text{supp}(f+g)\cap (A\cup B)=\emptyset$, $\Vert\tilde{f}\Vert_\infty\leq 1$ and $\Vert\tilde{f}\Vert_\infty\leq\inf_{n\in\text{supp}(g)}\vert e_n^*(g)\vert$.
		\end{itemize}
		
		The least constant verifying \eqref{efe} is denoted by $\mathcal F=\mathcal F[\mathbb X,\mathcal B]$ and we say that $\mathcal B$ has the Property (F) with constant $\mathcal F$.
		
		Also, if \eqref{efe} is satisfied with the extra condition that $A<\text{supp}(g)\cup B$, we say that $\mathcal B$ has the Property (F$_p$) with constant $\mathcal F_p$.
	\end{defn}

	\begin{defn}
		We say that $\mathcal B$ has the Property (F$^*$) if there is a positive constant $C$ such that
		\begin{eqnarray}\label{efee}
			\Vert f+z\Vert\leq C \Vert f+y\Vert,
		\end{eqnarray}
		for any $f,z,y\in\mathbb X_{fin}$ satisfying the following conditions:
		
		\begin{itemize}
			\item[i)] $f\cdot z=0$, $f\cdot y=0$, $z\cdot y=0$,
			\item[ii)]  $\max\lbrace\Vert\tilde{f}\Vert_\infty,\Vert \tilde{z}\Vert_\infty\rbrace\leq 1$ .
			\item[iii)] $\vert D\vert\geq \vert \text{supp}(z)\vert$, where $D=\lbrace n\in \text{supp}(y) : \vert e_n^*(y)\vert=1\rbrace$.
			\item[iv)] $\inf_{n\in \text{supp}(y)}\vert e_n^*(y)\vert\geq \Vert\tilde{f}\Vert_\infty$.
		\end{itemize}
		The least constant verifying \eqref{efee} is denoted by $\mathcal F^*=\mathcal F^*[\mathbb X,\mathcal B]$ and we say that $\mathcal B$ has the Property (F$^*$) with constant $\mathcal F^*$.
		
		Also, if \eqref{efee} is satisfied with the extra condition that $\text{supp(z)}<\text{supp}(f+y)$, we say that $\mathcal B$ has the Property (F$^*_p$) with constant $\mathcal F_p^*$.
	\end{defn}
	
	The main theorems that we will prove are the following.
	\begin{thm}\label{main}
		Let $\mathcal B$ be a basis in a Banach space $\mathbb X$. 
		\begin{itemize}
			\item[a)] If $\mathcal B$ is almost-greedy with constant $C_{al}$, then $\mathcal B$ has the Property (F$^*$) with constant $\mathcal F^*\leq C_{al}(1+2C_{al})$.
			\item[b)]If $\mathcal B$ has the Property (F$^*$) with constant $\mathcal F^*$, then the basis is almost-greedy with constant $C_{al}\leq (\mathcal F^*)^2$.
		\end{itemize}
	\end{thm}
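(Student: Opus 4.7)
My plan is to prove the two directions separately. For direction (b), I will derive almost-greediness from the characterization $C_{al}\le C_q\Delta$ quoted in the excerpt, by showing that Property (F$^*$) implies both quasi-greediness and symmetry for largest coefficients, each with constant $\le\mathcal F^*$. For direction (a), I will apply the almost-greedy inequality to the vector $h:=f+z+y$ and then clean up with one further use of quasi-greediness.

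For direction (b), the estimate $C_q\le\mathcal F^*$ is obtained by applying (F$^*$) with $z=0$: given $h$ and a greedy set $A$ of order $m$, after rescaling so that $\max_{n\notin A}|e_n^*(h)|=1$, the triple $(h-P_A(h),\,0,\,P_A(h))$ satisfies conditions (i)--(iv); condition (iii) is vacuous since $\text{supp}(z)=\emptyset$, and (iv) is the defining inequality of a greedy set. This yields $\|h-\mathcal G_m(h)\|\le\mathcal F^*\|h\|$. The estimate $\Delta\le\mathcal F^*$ is obtained by feeding indicator sums into (F$^*$): given $|A|\le|B|<\infty$, $A\cap B=\emptyset$, $\varepsilon\in\mathcal E_A$, $\varepsilon'\in\mathcal E_B$ and $f$ disjoint from $A\cup B$ with $\|\tilde f\|_\infty\le1$, the triple $(f,1_{\varepsilon A},1_{\varepsilon' B})$ satisfies (i)--(iv) (note $D=B$, since all coefficients of $1_{\varepsilon' B}$ have absolute value $1$), yielding $\|f+1_{\varepsilon A}\|\le\mathcal F^*\|f+1_{\varepsilon' B}\|$. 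Combining these two with the bound $C_{al}\le C_q\Delta$ from the third bullet of the almost-greedy characterization in the excerpt produces $C_{al}\le(\mathcal F^*)^2$.

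For direction (a), set $h=f+z+y$ and $k=|\text{supp}(z)|$. Using (iii) and (iv), I can select a greedy set $\Gamma$ of $h$ of order $k$ which lies inside $\text{supp}(y)$: on $\text{supp}(y)$ the coefficients of $h$ are $\ge\|\tilde f\|_\infty$ by (iv) and thus dominate those of $f$; the unit-coefficient subset $D\subset\text{supp}(y)$ has $|D|\ge k$, while coefficients on $\text{supp}(z)$ have absolute value $\le 1$. A short case split according to whether the ``above-unit'' set $E=\{n\in\text{supp}(y)\setminus D:|e_n^*(y)|>1\}$ has size $\ge k$ or $<k$, combined with a tie-breaking argument, produces such a $\Gamma$. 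Setting $y_r:=y-P_\Gamma(h)$, the identity $h-P_\Gamma(h)=f+z+y_r$ and the almost-greedy inequality with comparison set $G=\text{supp}(z)$ give $\|f+z+y_r\|\le C_{al}\|f+y\|$; the triangle inequality then reduces matters to estimating $\|y_r\|$.

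To finish, I observe that $\Gamma$ is simultaneously a greedy set of $f+y$ of order $k$, and that, by (iv), $\text{supp}(y)$ itself is a greedy set of $f+y$ of order $|\text{supp}(y)|$. Two applications of quasi-greediness yield $\|f+y_r\|\le C_q\|f+y\|$ and $\|f\|\le C_q\|f+y\|$, whence $\|y_r\|\le 2C_q\|f+y\|$. Substituting, $\|f+z\|\le(C_{al}+2C_q)\|f+y\|\le C_{al}(1+2C_{al})\|f+y\|$ using $C_q\le C_{al}$, which gives $\mathcal F^*\le C_{al}(1+2C_{al})$. The main technical obstacle is the construction of $\Gamma\subset\text{supp}(y)$: this is precisely where the size condition $|D|\ge|\text{supp}(z)|$ of (iii) is essential, and the possible presence of ``above-unit'' coefficients of $y$ is what necessitates the case analysis.
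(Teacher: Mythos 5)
Your proof is correct, and both halves take routes genuinely different from the paper's.

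For part (a), you work directly with $h=f+z+y$ and construct a greedy set $\Gamma\subset\operatorname{supp}(y)$ of order $k=|\operatorname{supp}(z)|$, which is why you need the case split on the size of the ``above-unit'' set together with a tie-breaking argument. The paper instead first replaces $z$ by $1_{\varepsilon A}$ (recovering $z$ at the end via the convexity Lemma~\ref{convex1}) and drops the below-unit part of $y$, forming $h=f+1_{\varepsilon A}+P_{B_2}(y)+1_{\eta D}$ with $D=\{n:|e_n^*(y)|=1\}$, $B_2=\{n:|e_n^*(y)|>1\}$. Because every coefficient of that $h$ outside $\operatorname{supp}(f)$ has modulus $\geq 1$, the set $B_2\cup D$ is a greedy set of $h$ of order $n=|B_2|+|D|\geq|\operatorname{supp}(z)|$ with no case analysis needed, and the almost-greedy inequality with comparison set $A=\operatorname{supp}(z)$ yields the same $C_{al}(1+2C_{al})$ after two applications of quasi-greediness. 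Your version is conceptually more direct since you never leave the original $z$, at the price of the tie-breaking discussion, which you should spell out if you write this up (in particular that $\Gamma$ is simultaneously a greedy set for $f+y$). For part (b), you deduce $C_q\leq\mathcal F^*$ and $\Delta\leq\mathcal F^*$ from Property (F$^*$) and then invoke the quoted bound $C_{al}\leq C_q\Delta$; the paper instead applies the reformulation iii) of Proposition~\ref{prop1} directly to $f'=\tfrac1t(f-\mathcal G_m(f))$, recognizes $P_{(A\cup G)^c}(f-P_A(f))+t1_{\eta(G\setminus A)}$ as the truncation $T_t(f-P_A(f))$, and finishes with Lemma~\ref{trunc}. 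Both arrive at $(\mathcal F^*)^2$; your version is shorter but leans on the external characterization from \cite{BBG}, whereas the paper's is self-contained modulo the truncation-operator lemma (and implicitly establishes the item iii) formulation of (F$^*$), which is used elsewhere).
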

	
	\begin{thm}\label{maintwo}
		Let $\mathcal B$ be a basis in a Banach space $\mathbb X$. 
		\begin{itemize}
			\item[a)] If $\mathcal B$ is partially-greedy with constant $C_{p}$, then $\mathcal B$ has the Property (F$^*_p$) with constant $\mathcal F^*_p\leq C_p(1+2C_p)$.
			\item[b)]If $\mathcal B$ has the Property (F$^*_p$) with constant $\mathcal F^*_p$, then the basis is partially-greedy with constant $C_{p}\leq (\mathcal F^*_p)^2$.
		\end{itemize}
	\end{thm}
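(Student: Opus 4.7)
For part (a), I apply the partial-greedy inequality once to a suitably augmented vector. Set $m_0 := |\text{supp}(z)|$, $N := \max\text{supp}(z)$, $E := \{n \in \text{supp}(y) : |e_n^*(y)| > 1\}$, and $E' := \text{supp}(y) \setminus (D \cup E)$. Using that $\text{supp}(z) < \text{supp}(f+y)$ and $|D| \geq m_0$, I choose a buffer $F \subset \{1,\ldots,N\} \setminus \text{supp}(z)$ disjoint from $\text{supp}(f)\cup\text{supp}(y)$ with $|F| = \max(0,\, N-|D|-|E|)$. For $h := f + z + y + 1_{\varepsilon F}$ one then has $S_N(h) = z + 1_{\varepsilon F}$ and $h - S_N(h) = f + y$, and the set $G := D \cup E \cup F$ (of size $m := |D|+|E|+|F| \geq N$) is a valid greedy set of order $m$ in $h$, because the magnitudes of the coefficients of $h$ on $G$ are all $\geq 1$ while those off $G$ are all $\leq 1$ (by (ii)--(iv)). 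The partial-greedy inequality thus yields
\[
  \|f + z + y|_{E'}\| \;=\; \|h - \mathcal{G}_m(h)\| \;\leq\; C_p\,\|h - S_N(h)\| \;=\; C_p\,\|f+y\|.
\]
A triangle inequality plus two applications of quasi-greediness (recall $C_q \leq C_p$) on $f+y$, with the (valid) greedy sets $D \cup E$ and $\text{supp}(y)$, controls $\|y|_{E'}\|$; collecting constants yields $\mathcal{F}^*_p \leq C_p(1+2C_p)$.

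For part (b), fix $f \in \mathbb{X}$, $m \in \mathbb{N}$ and $k \leq m$. Let $G$ be the greedy set of $f$ of order $m$ and $\lambda := \min_{n \in G}|e_n^*(f)|$. Split $f = \alpha + \beta + \gamma + \delta$ according to the partition of $\text{supp}(f)$ by $\{G,G^c\} \times \{[1,k],(k,\infty)\}$, so that $f - \mathcal{G}_m(f) = \alpha + \beta$ and $f - S_k(f) = \alpha + \gamma$; a counting argument gives $|\text{supp}(\gamma)| \geq |\text{supp}(\beta)|$, and $\text{supp}(\beta) < \text{supp}(\alpha+\gamma)$ by construction. Pick a buffer $H$ of size $|\text{supp}(\beta)|$ at indices $> k$ disjoint from $\text{supp}(f)$. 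A first application of (F$^*_p$) to $f' := \alpha/\lambda$, $z := \beta/\lambda$, $y := \gamma/\lambda + 1_{\varepsilon H}$ (whose hypotheses follow from the greedy/non-greedy magnitude dichotomy and the position of $H$) yields
\[
  \|\alpha + \beta\| \;\leq\; \mathcal{F}^*_p\,\|\alpha + \gamma + \lambda 1_{\varepsilon H}\|.
\]
A second, carefully staged application of (F$^*_p$) --- swapping the buffer $\lambda 1_{\varepsilon H}$ against a matching top block of $\gamma$ after renormalising by $\max_n |e_n^*(\gamma)|$, and placing $H$ at indices consistent with the order condition in both applications --- then removes the buffer, producing $\|\alpha + \gamma + \lambda 1_{\varepsilon H}\| \leq \mathcal{F}^*_p\,\|\alpha + \gamma\|$. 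Combining the two bounds and taking the infimum over $k \leq m$ gives $C_p \leq (\mathcal{F}^*_p)^2$.

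The hardest step will be the second application of (F$^*_p$) in part (b): the buffer $H$ must be positioned so that $\text{supp}(z)<\text{supp}(f+y)$ holds in both applications, and the surrogate $y$ of the second application must provide at least $|H|$ coefficients of magnitude exactly $1$ after normalisation. Resolving this requires placing $H$ at unused indices in a controlled range (possible since $\text{supp}(f)$ is finite), possibly adjoining an extra flat indicator sum to $y$, and selecting the swapped block of $\gamma$ as its top coefficients, so that after dividing by $\max_n |e_n^*(\gamma)|$ enough magnitudes hit $1$ to satisfy the democratic-type requirement $|D| \geq |\text{supp}(z)|$.
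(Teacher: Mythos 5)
Your plan for part~(a) is correct and differs mildly from the paper's. Where the paper fills the full initial segment by adjoining $1_B$ with $B=\{1,\dots,\max\text{supp}(z)\}\setminus\text{supp}(z)$, you adjoin a buffer $1_{\varepsilon F}$ sized exactly so that $|D\cup E\cup F|\ge N=\max\text{supp}(z)$; and where the paper discards the small coefficients $y_1$ before applying the partial-greedy inequality, you keep $y$ intact and subtract $P_{E'}(y)$ afterwards by a triangle inequality and two quasi-greedy bounds. Both routes are sound. In fact your computation gives $\|f+z\|\le C_p\|f+y\|+2C_q\|f+y\|\le 3C_p\|f+y\|$, which is stronger than the stated $C_p(1+2C_p)$ for all $C_p\ge 1$; you should keep the sharper constant rather than ``collecting constants'' up to $C_p(1+2C_p)$.

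Part~(b) has a genuine gap, and it is precisely the step you flag as the hardest one. Your choice $y=\gamma/\lambda + 1_{\varepsilon H}$ forces you to import a buffer $H$ (because the set $D$ for $\gamma/\lambda$ alone, namely $\{n:\,|e_n^*(f)|=\lambda,\ n\in G\cap(k,\infty)\}$, can be as small as a single index, so the hypothesis $|D|\ge|\text{supp}(z)|$ may fail without $H$). But that buffer cannot in general be removed by another application of (F$_p^*$). First, the prefix condition $\text{supp}(z'')<\text{supp}(f''+y'')$ requires $H$ to sit strictly between $\{1,\dots,k\}$ and $\text{supp}(\alpha+\gamma)$; yet $\text{supp}(\alpha+\gamma)\subset(k,\infty)$ can start at $k+1$, leaving no room. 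Second, even if room existed, the ``matching top block of $\gamma$'' has magnitude $1$ after renormalisation only where $|e_n^*(\gamma)|=\max_n|e_n^*(\gamma)|$, which again can be a single index, so $|D''|\ge|H|$ fails; adjoining a further flat indicator to $y''$ merely re-creates the same problem one level down. The fix is to drop the extra buffer entirely and take $y:=1_{\varepsilon(G\cap(k,\infty))}$ with $\varepsilon\equiv\{\text{sign}(e_n^*(f))\}$, i.e.\ the plain indicator on the greedy indices past $k$. Then $D=G\cap(k,\infty)$ automatically, the counting bound $|D|\ge|\text{supp}(\beta)|$ holds because $|G\cap(k,\infty)|\ge|G^c\cap[1,k]|$ for $m\ge k$, and the key identity is
\begin{equation*}
  \lambda\bigl(f'+y\bigr)\;=\;\alpha+\lambda\,1_{\varepsilon(G\cap(k,\infty))}\;=\;T_\lambda\bigl(f-S_k(f)\bigr),
\end{equation*}
the $\lambda$-truncation of $f-S_k(f)$. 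After the single application of (F$_p^*$), Lemma~\ref{trunc} (legitimate since (F$_p^*$) implies quasi-greediness with $C_q\le\mathcal F_p^*$) gives $\|T_\lambda(f-S_k(f))\|\le\mathcal F_p^*\|f-S_k(f)\|$, and you obtain $C_p\le(\mathcal F_p^*)^2$ with no buffer to dismantle.
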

	
	The structure of the paper is the following: in Section \ref{ff}, we will show some basics about the Properties (F) and (F$^*$). In Section \ref{ff1}, we prove Theorem \ref{main}. In Section \ref{ff2} we give a brief summary about the Properties (F$_p$) and (F$_p^*$), in Section \ref{ff3} we prove Theorem \ref{maintwo} and, finally, in Section \ref{ff4}, we give some density results that we use in the paper.
	\section{Properties (F) and (F$^*$)}\label{ff}
	This section is focused in the study of the Properties (F) and (F*). In fact, we will show that these properties are equivalent. To show that we will need some auxiliary lemmas about convexity. 
	\begin{lem}{\cite[Corollary 2.3]{AABW}}\label{convex1}
		Let $\mathbb X$ be a Banach space, let $\mathcal B$ be a basis for $\mathbb X$ and $J$ a finite set.
		\begin{itemize}
			\item[(i)] For any scalars $(a_j)_{j\in J}$ with $0\leq a_j\leq 1$ and any $g\in\mathbb X$,
			$$\Vert g+\sum_{j\in J}a_j e_j\Vert \leq \sup\lbrace \Vert g+1_{A}\Vert\, :\; A\subseteq J\rbrace.$$
			\item[(ii)] For any scalars $(a_j)_{j\in J}$ with $\vert a_j\vert\leq 1$ and any $g\in\mathbb X$,
			$$\Vert g+\sum_{j\in J}a_j e_j\Vert \leq \sup_{\varepsilon\in\mathcal E_J} \Vert g+1_{\varepsilon J}\Vert.$$
		\end{itemize}
		
	\end{lem}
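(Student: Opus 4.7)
The plan is to exploit the fact that the map $\phi:(a_j)_{j\in J}\mapsto\Vert g+\sum_{j\in J}a_je_j\Vert$ is the composition of an affine map with a norm, and is therefore a convex function on the space of scalar sequences indexed by $J$. Both parts then reduce to writing the specific coefficient vector $(a_j)_{j\in J}$ as a convex combination of ``extreme'' coefficient vectors (indicator vectors of subsets in (i), sign vectors in (ii)) and then invoking the triangle inequality.

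For (i), the extreme points of the cube $[0,1]^J$ are exactly the $\{0,1\}$-valued vectors, i.e.\ the indicators $1_A$ with $A\subseteq J$. An explicit and very clean convex decomposition is the product formula
$$\lambda_A\;:=\;\prod_{j\in A}a_j\prod_{j\notin A}(1-a_j),\qquad A\subseteq J,$$
which satisfies $\lambda_A\geq 0$, $\sum_{A\subseteq J}\lambda_A=1$, and $\sum_{A\subseteq J}\lambda_A\,1_A=\sum_{j\in J}a_je_j$ (fix $j$ and sum over the $A$'s containing $j$). Since the $\lambda_A$ sum to $1$, one can legally add $g$ to both sides to obtain the vector-valued identity $g+\sum_{j\in J}a_je_j=\sum_{A\subseteq J}\lambda_A\,(g+1_A)$, and the triangle inequality finishes the argument:
$$\Big\Vert g+\sum_{j\in J}a_je_j\Big\Vert\;\leq\;\sum_{A\subseteq J}\lambda_A\Vert g+1_A\Vert\;\leq\;\sup_{A\subseteq J}\Vert g+1_A\Vert.$$

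For (ii), I would first factor each scalar $a_j$ as a convex combination of two unimodular scalars. In the real case simply write $a_j=\tfrac{1+a_j}{2}(+1)+\tfrac{1-a_j}{2}(-1)$. In the complex case, if $a_j=r_je^{i\theta_j}$ with $r_j\in[0,1]$, choose $\varphi_j\in[0,\pi/2]$ with $\cos\varphi_j=r_j$ and write $a_j=\tfrac12 e^{i(\theta_j+\varphi_j)}+\tfrac12 e^{i(\theta_j-\varphi_j)}$. In either case each $a_j$ is a two-term convex combination of unimodular scalars $\varepsilon_j^{(0)},\varepsilon_j^{(1)}$. Distributing the resulting product over $J$ exactly as in part (i) expresses $\sum_{j\in J}a_je_j$ as a finite convex combination of vectors of the form $1_{\varepsilon J}$ with $\varepsilon\in\mathcal E_J$; adding $g$ and applying the triangle inequality gives the claimed bound.

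The only step requiring any care is the complex case of (ii): the full set $\mathcal E_J$ is uncountable, so if one tried to integrate against uniform measure on a product of circles one would have to import a Bochner-integral version of the triangle inequality. The two-unimodular factorization above avoids this entirely by keeping the decomposition finite, and this is the route I would take. Everything else is a routine combination of convexity and the triangle inequality.
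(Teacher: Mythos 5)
The paper does not actually prove this lemma: it is quoted verbatim from \cite[Corollary 2.3]{AABW}, so there is no in-paper proof to compare against. Your argument is nevertheless correct and complete, so let me comment on it in its own right and relative to the usual proofs of this statement in the literature.

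Your proof of (i) via the product weights $\lambda_A=\prod_{j\in A}a_j\prod_{j\notin A}(1-a_j)$ is a valid convex decomposition; the verification that the marginals give back $a_j$ is exactly as you say. The more common route in the literature (and, if memory serves, the one in \cite{AABW}) is Abel summation: order $J=\{j_1,\dots,j_n\}$ so that $a_{j_1}\geq\cdots\geq a_{j_n}\geq 0$ and write $\sum_j a_je_j=\sum_{k=1}^n(a_{j_k}-a_{j_{k+1}})1_{\{j_1,\dots,j_k\}}$ with $a_{j_{n+1}}:=0$, noting that the coefficients are nonnegative and sum to $a_{j_1}\leq1$; adding $g$ and applying the triangle inequality then needs only the $n+1$ sets $\emptyset,\{j_1\},\dots,\{j_1,\dots,j_n\}$, rather than your $2^{|J|}$ sets. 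Both are honest convex combinations, so both work; the Abel decomposition is leaner, your product decomposition is symmetric and needs no reordering. For (ii), your two-term unimodular factorization $a_j=\tfrac12e^{i(\theta_j+\varphi_j)}+\tfrac12e^{i(\theta_j-\varphi_j)}$ is correct, and you are right that it sidesteps the issue that $\mathcal E_J$ is a torus in the complex case. I would only add that the alternative argument --- the affine-plus-norm map $a\mapsto\|g+\sum_ja_je_j\|$ is convex and continuous on the compact polydisc $\overline{\mathbb D}^J$, hence attains its maximum at an extreme point, and the extreme points of $\overline{\mathbb D}^J$ are exactly $\mathcal E_J$ --- is also perfectly rigorous without any integration, since $J$ is finite and everything lives in a finite-dimensional slice of $\mathbb X$; one does not need a Bochner integral, only the finite-dimensional Bauer maximum principle. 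Your finite decomposition is a nice way to make the same conclusion entirely elementary.
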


	\begin{lem}\label{guau}
		Let $\mathcal B$ be a basis of a Banach space $\mathbb X$. Then,
		$$\sup_{\varepsilon\in\mathcal E_A}\Vert f+1_{\varepsilon A}\Vert \leq 5\sup_{B\subseteq A}\Vert f+1_B\Vert.$$
	\end{lem}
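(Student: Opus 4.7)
The plan is to express $f+1_{\varepsilon A}$ as a linear combination of elements of the form $f+1_B$ with $B\subseteq A$, using the convexity Lemma \ref{convex1}(i) together with a simple algebraic identity for signed sums.

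First I would handle the real scalar case, where $\varepsilon_n\in\{-1,+1\}$. Setting $A^{\pm}=\{n\in A:\varepsilon_n=\pm 1\}$, one verifies directly the identity
\begin{equation*}
f+1_{\varepsilon A}=2(f+1_{A^{+}})-(f+1_A),
\end{equation*}
and the triangle inequality yields $\|f+1_{\varepsilon A}\|\le 3\sup_{B\subseteq A}\|f+1_B\|$, which is already stronger than the claim in this case.

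For complex scalars, I would write $\varepsilon_n=a_n+ib_n$ with $a_n,b_n\in[-1,1]$ real, and apply the triangle inequality to get
\begin{equation*}
\|f+1_{\varepsilon A}\|\le \left\|f+\sum_n a_n e_n\right\|+\left\|\sum_n b_n e_n\right\|.
\end{equation*}
Each piece is of the form $\|g+\sum_n c_n e_n\|$ with real $c_n\in[-1,1]$ and $g\in\{f,0\}$. To bound it, I decompose $c_n=c_n^{+}-c_n^{-}$ with $c_n^{\pm}\in[0,1]$ having disjoint supports $A_c^{\pm}$, and use
\begin{equation*}
g+\sum_n c_n e_n=(g-1_{A_c^{-}})+\sum_{n\in A_c^{+}} c_n^{+} e_n+\sum_{n\in A_c^{-}}(1-c_n^{-})\, e_n.
\end{equation*}
Since the summation coefficients lie in $[0,1]$, Lemma \ref{convex1}(i) bounds the norm by $\sup_{D\subseteq A_c^{+}\cup A_c^{-}}\|g-1_{A_c^{-}}+1_D\|$; writing $D=D^{+}\cup D^{-}$ and applying the real-case identity to $g+1_{D^{+}}-1_{A_c^{-}\setminus D^{-}}$ bounds this in turn by a fixed multiple of $\sup_B\|g+1_B\|$. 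For the imaginary contribution, where $g=0$, I pass back to sums involving $f$ via $\|1_B\|\le\|f+1_B\|+\|f\|\le 2\sup_B\|f+1_B\|$, using also $\|f\|=\|f+1_\emptyset\|\le\sup_B\|f+1_B\|$.

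The main obstacle is arranging these triangle-inequality splits so that the accumulated constant is precisely $5$: a naive bookkeeping produces a larger constant, and obtaining the sharp value requires grouping terms so that the imaginary-part estimate is absorbed against the real-part estimate rather than being bounded in isolation.
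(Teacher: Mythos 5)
Your approach is essentially the paper's: reduce the complex case to the real one by splitting $1_{\varepsilon A}$ into real and imaginary parts, decompose each further into positive and negative parts, and invoke the convexity Lemma \ref{convex1}(i). Your real-case identity $f+1_{\varepsilon A}=2(f+1_{A^{+}})-(f+1_A)$ is correct and gives the constant $3$ directly, which is a nice self-contained replacement for the paper's citation of \cite[Lemma 2.3]{BB}.

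The gap you flag in the bookkeeping is genuine, but the fix is simpler than the ``absorption against the real part'' you conjecture, and you should make it explicit rather than leave it as a remark. After the split $\Vert f+1_{\varepsilon A}\Vert\le\Vert f+\sum_n a_n e_n\Vert+\Vert\sum_n b_n e_n\Vert$, write $a_n=a_n^{+}-a_n^{-}$ and $b_n=b_n^{+}-b_n^{-}$ with $a_n^{\pm},b_n^{\pm}\in[0,1]$ supported on disjoint sets. For the real part two triangle inequalities and Lemma \ref{convex1}(i) give
$\Vert f+\sum_n a_n e_n\Vert\le\Vert f+\sum_n a_n^{+}e_n\Vert+\Vert f+\sum_n a_n^{-}e_n\Vert+\Vert f\Vert\le 3\sup_{B\subseteq A}\Vert f+1_B\Vert$.
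For the imaginary part, do \emph{not} pass through $\Vert 1_B\Vert\le\Vert f+1_B\Vert+\Vert f\Vert$; instead notice that
$\sum_n b_n e_n=(f+\sum_n b_n^{+}e_n)-(f+\sum_n b_n^{-}e_n)$,
so $f$ cancels inside this piece and $\Vert\sum_n b_n e_n\Vert\le\Vert f+\sum_n b_n^{+}e_n\Vert+\Vert f+\sum_n b_n^{-}e_n\Vert\le 2\sup_{B\subseteq A}\Vert f+1_B\Vert$, with no extra $\Vert f\Vert$ at all. Adding $3+2$ yields exactly $5$, matching the paper. With this cancellation in place your argument closes; without it, your route to controlling $\Vert\sum_n b_n e_n\Vert$ picks up additional copies of $\Vert f\Vert$ and does not reach the stated constant.
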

	\begin{proof}
		If $\mathbb F=\mathbb R$, following the result \cite[Lemma 2.3]{BB}, we know that
		\begin{eqnarray*}\label{real}
			\sup_{\varepsilon_n=\pm 1}\Vert f+1_{\varepsilon A}\Vert \leq 3\sup_{B\subseteq A}\Vert f+1_B\Vert.
		\end{eqnarray*}
		We prove now the result for the complex case. In that case,
		\begin{eqnarray}\label{decom}
			\nonumber	1_{\varepsilon A}&=&\sum_{n\in A}\text{Re}(\varepsilon_n)e_n+i\sum_{n\in A}\text{Im}(\varepsilon_n)e_n\\
			\nonumber	&=&\sum_{n\in A_1}\text{Re}^+(\varepsilon_n)e_n-\sum_{n\in A_2}\text{Re}^-(\varepsilon_n)e_n\\
			&+&i\left(\sum_{n\in A_3}\text{Im}^+(\varepsilon_n)e_n-\sum_{n\in A_4}\text{Im}^-(\varepsilon_n)e_n\right),
		\end{eqnarray}
		where $A_i$ are the corresponding subsets of $A$. Then,
		\begin{eqnarray*}
			\Vert f+1_{\varepsilon A}\Vert&\leq& \Vert f+\sum_{n\in A}\text{Re}(\varepsilon_n)e_n\Vert + \Vert \sum_{n\in A}\text{Im}(\varepsilon_n)e_n\Vert\\	
			&\leq & \Vert f\Vert + \Vert f+\sum_{n\in A_1}\text{Re}^+(\varepsilon_n)e_n\Vert + \Vert f+\sum_{n\in A_2}\text{Re}^-(\varepsilon_n)e_n\Vert\\
			&+& \Vert f+\sum_{n\in A_3}\text{Im}^+(\varepsilon_n)e_n\Vert + \Vert f+\sum_{n\in A_4}\text{Im}^-(\varepsilon_n)e_n\Vert\\
			&\underset{\text{Lemma}\, \ref{convex1} }{\leq}& 5\sup_{B\subseteq A}\Vert f+1_B\Vert.
		\end{eqnarray*}
	\end{proof}
	
	\begin{thm}\label{p1}
		Let $\mathcal B$ be a basis in a Banach space $\mathbb X$. The basis is democratic (or symmetric for largest coefficients) and quasi-greedy if and only if the basis has the Property (F). Concretely:
		\begin{enumerate}
			\item[(1)] If $\mathcal B$ has the Property (F) with constant $\mathcal F$, then the basis is $C_q$-quasi-greedy and $\Delta_d$-democratic with
			$$\max\lbrace C_q, \Delta_d\rbrace\leq \mathcal F.$$
			\item[(2)] If $\mathcal B$ has the Property (F) with constant $\mathcal F$, then the basis is $C_q$-quasi-greedy and $\Delta$-symmetric for largest coefficients with
			$$C_q\leq \mathcal F,\;\; \Delta\leq 5(\mathcal F+4\mathcal F^2+4\mathcal F^3).$$
			
			\item[(3)] If $\mathcal B$ is $\Delta_d$-democratic and $C_q$-quasi-greedy, then the basis has the Property (F) with constant 
			$$\mathcal F\leq C_q(1+(1+C_q)\Delta_d).$$
			\item[(4)] If $\mathcal B$ is $\Delta$-symmetric for largest coefficients and $C_q$-quasi-greedy, then the basis has the Property (F) with constant 
			$$\mathcal F\leq 3\Delta C_q.$$
		\end{enumerate}
	\end{thm}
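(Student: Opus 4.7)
The theorem has four parts: (1) and (2) deduce quasi-greediness together with democracy or symmetry from Property (F); (3) and (4) prove the converses. I will split my plan accordingly.

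For (1), the strategy is to specialize the free parameters in Property (F). For quasi-greediness, given $f\in\mathbb X$ with greedy sum $\mathcal G_m(f)$, set $\alpha = \min_{n\in A_m(f)}|e_n^*(f)|$ and apply Property (F) with $A=B=\emptyset$, taking the roles of the $f$ and $g$ there to be $\alpha^{-1}(f-\mathcal G_m(f))$ and $\alpha^{-1}\mathcal G_m(f)$ respectively. The conditions $\|\tilde f\|_\infty\le 1$ and $\|\tilde f\|_\infty\le\inf|e_n^*(g)|$ both hold by the definition of a greedy set, yielding $\|f-\mathcal G_m(f)\|\le\mathcal F\|f\|$. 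For democracy, given $|A|\le|B|$ (not necessarily disjoint), note $|A\setminus B|\le|B\setminus A|$ and apply Property (F) with $f:=1_{A\cap B}$, $g:=0$, and the disjoint sets $A\setminus B$ and $B\setminus A$ in the roles of $A$ and $B$, obtaining $\|1_A\|\le\mathcal F\|1_B\|$.

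For (3) and (4), the shared central idea is that, under the Property (F) hypotheses, the vector $h:=f+g+1_B$ admits $\mathrm{supp}(g)\cup B$ as a valid greedy set of order $|\mathrm{supp}(g)|+|B|$: the minimum modulus of its coefficients on this set is at least $\|\tilde f\|_\infty$, which dominates the maximum on $\mathrm{supp}(f)$. Splitting $\mathrm{supp}(g)=G_+\cup G_-$ according to whether $|e_n^*(g)|\ge 1$ or $<1$, analogous extractions show that $G_+\cup B$ is greedy of $h$ of order $|G_+|+|B|$ and $G_+$ alone is greedy of order $|G_+|$ (with ties broken appropriately). Applying quasi-greediness yields that $\|f\|$, $\|f+g|_{G_-}\|$, and $\|f+g|_{G_-}+1_B\|$ are each bounded by $C_q\|h\|$. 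For (3), combine with democracy and $\|1_B\|\le\|f+g|_{G_-}+1_B\|+\|f+g|_{G_-}\|\le 2C_q\|h\|$ through the triangle inequality to reach the claimed bound via $\|f+1_A\|\le\|f\|+\Delta_d\|1_B\|$. For (4), use symmetric for largest coefficients to obtain $\|f+1_A\|\le\Delta\|f+1_B\|$ and then $\|f+1_B\|\le\|f+g|_{G_-}+1_B\|+\|g|_{G_-}\|\le 3C_q\|h\|$, the bound on $\|g|_{G_-}\|$ coming from the triangle with $\|f\|$.

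For (2), quasi-greediness is handled exactly as in (1). For symmetric for largest coefficients, I would proceed in three stages. First, apply Lemma \ref{guau} on the LHS: $\|f+1_{\varepsilon A}\|\le 5\sup_{C\subseteq A}\|f+1_C\|$. Second, for each such $C$, apply Property (F) with $g=0$: $\|f+1_C\|\le\mathcal F\|f+1_B\|$. Third, bound $\|f+1_B\|$ in terms of $\|f+1_{\varepsilon' B}\|$: extract $1_{\varepsilon' B}$ from $f+1_{\varepsilon' B}$ as a greedy set of order $|B|$ to obtain $\|f\|\le\mathcal F\|f+1_{\varepsilon' B}\|$ and $\|1_{\varepsilon' B}\|\le(1+\mathcal F)\|f+1_{\varepsilon' B}\|$; then write $1_B-1_{\varepsilon' B}=\sum_{n\in B}(1-\varepsilon'_n)e_n$, apply Lemma \ref{convex1}(ii) to replace the complex coefficients of modulus at most 2 by a supremum over signed indicators, reduce to unsigned indicators via Lemma \ref{guau}, and invoke the already-proved democracy. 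The explicit form $5(\mathcal F+4\mathcal F^2+4\mathcal F^3)=5\mathcal F(1+4\mathcal F+4\mathcal F^2)$ reflects the factor $5$ from Lemma \ref{guau}, the $\mathcal F$ from Property (F), and the $(1+4\mathcal F+4\mathcal F^2)$ from the sign-removal step.

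The main technical obstacle is the sign-removal on the RHS in (2): unlike the LHS, where Lemma \ref{guau} directly upper-bounds a signed indicator by unsigned ones, the RHS requires a lower bound on $\|f+1_{\varepsilon' B}\|$ (equivalently, an upper bound on $\|f+1_B\|$ in terms of it), which is only accessible via a careful combination of greedy extractions and convexity arguments. Keeping the arithmetic tight enough to land at a bound polynomial in $\mathcal F$, rather than routing through the known (looser) almost-greedy chain that would yield $O(\mathcal F^5)$, is the delicate point.
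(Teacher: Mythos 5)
Parts (1), (3), and (4) of your plan are sound and essentially parallel the paper's argument, with a few small differences worth noting. For democracy in (1), your choice $g=0$ with $A\setminus B$ and $B\setminus A$ in the disjoint-set slots is a bit cleaner than the paper's device of padding $D\setminus C$ with a nonzero $g$; both are valid since Property (F) allows $\lvert A\rvert\le\lvert B\rvert$. For (3), your two greedy extractions from $h=f+g+1_B$ (first $G_+\cup B$, then $G_+$) give $\Vert 1_B\Vert\le 2C_q\Vert h\Vert$ directly, yielding the slightly sharper $\mathcal F\le C_q(1+2\Delta_d)$, which implies the paper's $C_q(1+(1+C_q)\Delta_d)$ since $C_q\ge 1$. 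In (4) you correctly triangulate through the small-coefficient part $g|_{G_-}$; the paper writes $g_1$ (the large part) in those two middle terms, which does not make the removed sets greedy sets of $h$ and reads like a typo --- your version is the one that actually runs.

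The genuine gap is in part (2), at the step bounding $\Vert f+1_B\Vert$ by a constant multiple of $\Vert f+1_{\eta B}\Vert$. The route you sketch --- $\Vert 1_B-1_{\eta B}\Vert\le 2\sup_{\varepsilon''}\Vert 1_{\varepsilon'' B}\Vert$ by Lemma \ref{convex1}(ii), then $\le 10\sup_{B'\subseteq B}\Vert 1_{B'}\Vert$ by Lemma \ref{guau}, then ``invoke democracy'' --- is circular. Democracy compares unsigned indicator sums only against unsigned ones, so it returns $\Vert 1_{B'}\Vert\le\Delta_d\Vert 1_B\Vert$, hence $\Vert 1_B-1_{\eta B}\Vert\le 10\mathcal F\Vert 1_B\Vert$: the right-hand side is the very quantity you must control, with a coefficient $\ge 10$, so the inequality cannot be unwound. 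Both Lemma \ref{guau} and Lemma \ref{convex1} produce upper bounds by suprema over \emph{unsigned} indicators; neither they nor democracy give a lower bound for the signed $\Vert 1_{\eta B}\Vert$ against the unsigned $\Vert 1_B\Vert$. The missing ingredient is precisely the quasi-greedy sign-comparison lemma the paper cites (\cite[Lemma 3.2]{AABW}, equivalently \cite[Prop.~2.1.11]{B}), namely $\Vert 1_{\varepsilon A}\Vert\le 2\kappa C_q\Vert 1_{\eta A}\Vert$ for all $\varepsilon,\eta\in\mathcal E_A$ with $\kappa=2$ over $\mathbb C$; its proof uses greedy-set extractions from $1_{\eta A}$ after decomposing by the real and imaginary signs of $\eta$, a step absent from your sketch. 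Once that lemma is granted, the paper's chain $\Vert f+1_{A'}\Vert\le\Vert f\Vert+\Vert 1_{A'}\Vert$, $\Vert f\Vert\le\mathcal F\Vert f+1_{\eta B}\Vert$, $\Vert 1_{A'}\Vert\le\Delta_d\Vert 1_B\Vert\le 4\mathcal F^2\Vert 1_{\eta B}\Vert$, $\Vert 1_{\eta B}\Vert\le\Vert f+1_{\eta B}\Vert+\Vert f\Vert$ closes the estimate in the advertised constant $5(\mathcal F+4\mathcal F^2+4\mathcal F^3)$.
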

	
	\begin{proof}
		First of all, we show (1). Assume that the basis has the Property (F) with constant $\mathcal F$. To show that $\mathcal B$ is quasi-greedy, we take $f\in\mathbb X_{fin}$ with $t=\Vert\tilde{f}\Vert_\infty$ and $m\in\mathbb N$. Then, if we take in the definition of the Property (F) $f'=\frac{f}{t}-\mathcal G_m(\frac{f}{t})$, $g'=\mathcal G_m(\frac{f}{t})$ and $A=B=\emptyset$, since $\Vert \tilde{f'}\Vert_\infty\leq \inf_{n\in\text{supp}(g')}\vert e_n^*(g')\vert$, we obtain that
		$$\Vert f-\mathcal G_m(f)\Vert=t\Vert f'\Vert \leq \mathcal F\,t\Vert f'+g'\Vert=\mathcal F\Vert f\Vert,$$
		so the basis is quasi-greedy with $C_q\leq \mathcal F$ for elements with finite support. To obtain that $\mathcal B$ is quasi-greedy for any $f\in\mathbb X$, we use Corollary \ref{bb2}.
		
		Prove now that the basis is democratic. For that, we take $C$ and $D$ two finite sets such that $\vert C\vert\leq \vert D\vert$. Now, we do the following decomposition:
		$$D=(D\cap C) \cup D_1 \cup D_2,$$
		where $\vert D_1\vert=\vert C\setminus D\vert$ and $D_1\cap D_2=\emptyset$. Hence, taking $f=1_{D\cap C}$, $g=1_{D_2}$, $A=C\setminus D$ and $B=D_1$,
		$$\Vert 1_C\Vert=\Vert 1_{C\cap D}+1_{C\setminus D}\Vert\leq \mathcal F\Vert 1_{D\cap C}+1_{D_2}+1_{D_1}\Vert=\mathcal F\Vert 1_D\Vert.$$
		Thus, $\mathcal B$ is democratic with $\Delta_d\leq \mathcal F$.
		
		Prove now (2). We only have to show that $\mathcal B$ is symmetric for largest coefficients. For that, take $f\in\mathbb X_{fin}$, $\Vert\tilde{f}\Vert_\infty\leq 1$, $A\cap B=\emptyset$, $\vert A\vert\leq\vert B\vert<\infty$, $\text{supp}(f)\cap (A\cup B)=\emptyset$, $\varepsilon\in\mathcal E_A$ and $\eta\in\mathcal E_B$. Using Lemmas \ref{guau} and \ref{convex1}, we only have to show that there is some absolute constant $C$ such that
		$$\Vert f+1_{A'}\Vert \leq C \Vert f+1_{\eta B}\Vert,\; \forall A'\subseteq A.$$

		Of course, since the Property (F) implies quasi-greediness with constant $C_q\leq\mathcal F$ by (1), if we take the element $h:=f+1_{\eta B}$ with $\Vert \tilde{f}\Vert_\infty \leq 1$, we have
		\begin{eqnarray}\label{aux}
			\Vert f\Vert = \Vert h-\mathcal G_{\vert B\vert}(h)\Vert \leq \mathcal F\Vert h\Vert=\mathcal F\Vert f+1_{\eta B}\Vert.
		\end{eqnarray}
		
		Also, respect to the set $A'$, we can have the following:
		\begin{eqnarray}\label{ap}
			\Vert 1_{A'}\Vert \leq \mathcal F\Vert 1_B\Vert \leq 4\mathcal F^2\Vert 1_{\eta B}\Vert,
		\end{eqnarray}
		where in the last inequality we have used \cite[Proposition 2.1.11]{B} or \cite[Lemma 3.2]{AABW}.\footnote{These results affirm that for quasi-greedy bases, $\Vert 1_{\varepsilon A}\Vert\leq 2\kappa C_q \Vert 1_{\eta A}\Vert$, for any $\eta,\varepsilon\in\mathcal E_A$ and any finite set $A$ with $\kappa=1$ if $\mathbb F=\mathbb R$ and $\kappa=2$ if $\mathbb F=\mathbb C$.}

		Thus, 
		\begin{eqnarray*}
			\Vert f+1_{A'}\Vert&\leq&\Vert f\Vert+\Vert 1_{A'}\Vert\underset{\eqref{aux}+\eqref{ap}}{\leq} \mathcal F\Vert f+1_{\eta B}\Vert+4\mathcal F^2\Vert 1_{\eta B}\Vert\\
			&\leq& (\mathcal F+4\mathcal F^2)\Vert f+1_{\eta B}\Vert + 4\mathcal F^2\Vert f\Vert\\
			&\leq&(\mathcal F+4\mathcal F^2+4\mathcal F^3)\Vert f+1_{\eta B}\Vert.
		\end{eqnarray*}
		
		Finally, applying convexity,
		
		$$\Vert f+1_{\varepsilon A}\Vert  \underset{\text{Lemma}\,\ref{guau}}{\leq} 5\sup_{A'\subseteq A}\Vert f+1_{A'}\Vert \leq 5(\mathcal F+4\mathcal F^2+4\mathcal F^3)\Vert f+1_{\eta B}\Vert.$$
		
		So, the basis is symmetry for largest coefficients for elements with finite support with constant
		$$\Delta\leq 5(\mathcal F+4\mathcal F^2+4\mathcal F^3).$$
		
		Applying Lemma \ref{sy}, the result follows for any $f\in\mathbb X$.

		(3) Assume now that $\mathcal B$ is $C_q$-quasi-greedy and $\Delta_d$-democratic and take $f, g\in\mathbb X_{fin}$ with $f\cdot g=0$, $\inf_{n\in \text{supp}(g)}\vert e_n^*(g)\vert\geq\Vert\tilde{f}\Vert_\infty$, $A\cap B=\emptyset$, $\vert A\vert\leq \vert B\vert<\infty$ and $\text{supp}(f+g)\cap (A\cup B)=\emptyset$.
		\begin{eqnarray}\label{one}
			\Vert f+1_A\Vert \leq \Vert f\Vert+\Vert 1_A\Vert\leq \Vert f\Vert+\Delta_d\Vert 1_B\Vert.
		\end{eqnarray}
		If we take $h:=f+g+1_B$, it is clear that $\text{supp}(g+1_B)$ is a greedy set of $h$. Then, if $\vert \text{supp}(g+1_B)\vert=n$,
		\begin{eqnarray}\label{two}
			\Vert f\Vert=\Vert h-\mathcal G_n(h)\Vert \leq C_q\Vert h\Vert=C_q\Vert f+g+1_B\Vert.
		\end{eqnarray}
		Since $\inf_{n\inf_{n\in \text{supp}(g)}}\vert e_n^*(g)\vert\geq\Vert\tilde{f}\Vert_\infty$ and $\Vert\tilde{f}\Vert_\infty\leq 1$, we can decompose $g$ as $g=g_1+g_2$, where $\text{supp}(g_1)=\lbrace n\in\text{supp}(g) : \vert e_n^*(g)\vert\geq 1\rbrace$ and $\text{supp}(g_2)=\lbrace n\in\text{supp}(g) : \vert e_n^*(g)\vert< 1\rbrace$. Then, if we take $u:=f+g_2+1_B$, $B$ is a greedy set for $u$ with of order $k:=\vert B\vert$, and taking $v=u+g_1$, $\text{supp}(g_1)$ is a greedy set of $v$ of order $p:=\vert \text{supp}(g_1)\vert$. Thus,
		\begin{eqnarray}\label{three}
			\nonumber\Vert 1_B\Vert &=&\Vert \mathcal G_k(u)\Vert \leq (1+C_q)\Vert u\Vert=(1+C_q)\Vert f+g_2+1_B\Vert\\
			&=&(1+C_q)\Vert v-\mathcal G_p(v)\Vert\leq (1+C_q)C_q\Vert f+g+1_B\Vert.
		\end{eqnarray}
		Adding up \eqref{two} and \eqref{three} in \eqref{one}, we obtain the result, that is, the basis has the Property (F) with $\mathcal F\leq C_q(1+(1+C_q)\Delta_d)$.

		(4) Finally, assume that $\mathcal B$ is $\Delta$-symmetric for largest coefficients and $C_q$-quasi-greedy. Take $f, g, A$ and $B$ as in the Property (F). Then,
		\begin{eqnarray*}
			\Vert f+1_A\Vert&\leq& \Delta \Vert f+1_B\Vert\leq\Delta (\Vert f+g_1+1_B\Vert+\Vert g_1+f\Vert+\Vert f\Vert)\\
			&\leq& 3C_q\Delta\Vert f+g+1_B\Vert. 
		\end{eqnarray*}
		Thus, the basis has the Property (F) with constant $\mathcal F\leq 3C_q\Delta$.  
	\end{proof}
	
	\begin{thm}\label{p2}
		Let $\mathcal B$ be a basis in a Banach space $\mathbb X$. The basis has the Property (F) if and only if the basis has the Property (F$^*$). Moreover, if $\mathcal F$ and $\mathcal F^*$ are the constants of the corresponding properties, then
		$$\mathcal F\leq \mathcal F^*\leq 5\mathcal F(1+2\mathcal F+8\mathcal F^2).$$
	\end{thm}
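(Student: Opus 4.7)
The plan is to prove both implications by direct reduction, using the convexity machinery (Lemma \ref{convex1} and Lemma \ref{guau}) to pass between general coefficients and signed indicator sums, and exploiting the fact that the coordinates of $y$ in (F$^*$) split naturally into an ``indicator piece'' on a subset of $D$ (playing the role of $1_B$ in (F)) and a ``large-coefficient piece'' on the rest of $\text{supp}(y)$ (playing the role of $g$ in (F)).

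For the direction $\mathcal F \leq \mathcal F^*$, given any $f, g, A, B$ satisfying the hypotheses of Property (F), I would set $z := 1_A$ and $y := g + 1_B$ and check that $(f, z, y)$ fulfils every hypothesis of (F$^*$). The disjointness conditions are immediate; $\Vert \tilde z\Vert_\infty = 1$ gives (ii); since $\text{supp}(f+g) \cap B = \emptyset$ combined with $f \cdot g = 0$ forces $B \cap \text{supp}(g) = \emptyset$, the coefficient of $y$ on $B$ is exactly $1$ and therefore $B \subseteq D$, so $|D| \geq |B| \geq |A| = |\text{supp}(z)|$ yields (iii); finally (iv) holds on $B$ trivially and on $\text{supp}(g)$ by the hypothesis $\Vert \tilde f\Vert_\infty \leq \inf_{n \in \text{supp}(g)}|e_n^*(g)|$. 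Applying (F$^*$) then gives the desired inequality.

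For the reverse direction, given $(f, z, y)$ satisfying the hypotheses of (F$^*$), I would first handle the degenerate case $\text{supp}(z) = \emptyset$: condition (iv) together with $f \cdot y = 0$ makes $\text{supp}(y)$ a greedy set of $h := f + y$, so the quasi-greediness that (F) implies (Theorem \ref{p1}(1), with $C_q \leq \mathcal F$) gives $\Vert f\Vert = \Vert h - \mathcal G_{|\text{supp}(y)|}(h)\Vert \leq \mathcal F \Vert f + y\Vert$. If $\text{supp}(z) \neq \emptyset$, then $|D| \geq 1$, and two successive applications of convexity (Lemma \ref{convex1}(ii) followed by Lemma \ref{guau}) yield
\begin{equation*}
\Vert f + z\Vert \;\leq\; \sup_{\varepsilon \in \mathcal E_{\text{supp}(z)}} \Vert f + 1_{\varepsilon \text{supp}(z)}\Vert \;\leq\; 5 \sup_{A' \subseteq \text{supp}(z)} \Vert f + 1_{A'}\Vert.
\end{equation*}
For each such $A'$, I would pick $B' \subseteq D$ with $|B'| = |A'|$ (possible because $|A'| \leq |\text{supp}(z)| \leq |D|$) and set $g' := y - 1_{B'}$, so that $f + g' + 1_{B'} = f + y$; applying Property (F) to $(f, g', A', B')$ then gives $\Vert f + 1_{A'}\Vert \leq \mathcal F \Vert f + y\Vert$.

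The only real obstacle is the bookkeeping required to verify that $(f, g', A', B')$ satisfies every hypothesis of (F): the disjointness clauses follow from $f \cdot y = 0$, $z \cdot y = 0$ and the construction of $g'$; the bound $\Vert \tilde f\Vert_\infty \leq 1$ is part of hypothesis (ii) of (F$^*$); and the crucial domination $\Vert \tilde f\Vert_\infty \leq \inf_{n \in \text{supp}(g')} |e_n^*(g')|$ simplifies to $\Vert \tilde f\Vert_\infty \leq \inf_{n \in \text{supp}(y) \setminus B'} |e_n^*(y)|$, which is immediate from (iv). None of these checks is hard in isolation, but matching all six constraints simultaneously is where care is needed. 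The direct argument sketched above produces a constant of order $5\mathcal F$; the precise stated bound $5\mathcal F(1 + 2\mathcal F + 8\mathcal F^2)$ likely arises from a slightly coarser accounting in which $\Vert f + 1_{A'}\Vert$ is first split as $\Vert f\Vert + \Vert 1_{A'}\Vert$ and each piece is controlled separately using the quasi-greediness and democracy provided by Theorem \ref{p1}(1).
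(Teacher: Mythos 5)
Your forward implication ($\mathcal F \leq \mathcal F^*$) is correct and coincides with the paper's argument. The degenerate case $\mathrm{supp}(z) = \emptyset$ in the reverse direction is also handled correctly.

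The reverse implication, however, has a genuine gap. You set $g' := y - 1_{B'}$ with $B' \subseteq D$ and claim that $(f, g', A', B')$ satisfies the hypotheses of Property (F). But $D$ is the set where $\lvert e_n^*(y)\rvert = 1$, not where $e_n^*(y) = 1$. For $n \in B'$ with $e_n^*(y) \neq 1$ (e.g.\ $e_n^*(y) = -1$ in the real case, or any unimodular complex number other than $1$), you get $e_n^*(g') = e_n^*(y) - 1 \neq 0$, so $n \in \mathrm{supp}(g') \cap B'$. This violates the hypothesis $\mathrm{supp}(f + g') \cap (A' \cup B') = \emptyset$ in Property (F). Moreover, in the complex case $\lvert e_n^*(g')\rvert = \lvert e_n^*(y) - 1\rvert$ can be arbitrarily small, so the domination $\Vert \tilde f\Vert_\infty \leq \inf_{n \in \mathrm{supp}(g')}\lvert e_n^*(g')\rvert$ would also fail.

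Consequently, the clean constant $5\mathcal F$ you obtain is not actually attainable by this route, and your closing remark---that the paper's larger bound $5\mathcal F(1 + 2\mathcal F + 8\mathcal F^2)$ merely reflects ``coarser accounting''---misdiagnoses the situation. The paper applies Property (F) with $g := P_{D^c}(y)$ and $B := D$ carrying the \emph{unsigned} indicator $1_D$, which keeps the hypotheses of (F) intact, but then $f + g + 1_D = f + y - 1_{\eta D} + 1_D$ differs from $f + y$, and the two error terms $\Vert 1_{\eta D}\Vert$ and $\Vert 1_D\Vert$ must be controlled separately; this is exactly what produces the factors $2\mathcal F$ and $8\mathcal F^2$. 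To repair your argument you would need to account for the discrepancy between $1_{B'}$ and $P_{B'}(y) = 1_{\eta B'}$, at which point you are led back to essentially the paper's estimate.
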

	
	\begin{proof}
		Assume that we have the Property (F$^*$) with constant $\mathcal F^*$ and take $f,g,A$ and $B$ as in the Property (F), that is, $f\cdot g=0$, $A\cap B=\emptyset$, $\vert A\vert\leq\vert B\vert$, $\text{supp}(f+g)\cap(A\cup B)=\emptyset$, $\Vert\tilde{f}\Vert_\infty\leq 1$ and $\Vert \tilde{f}\Vert_\infty\leq \inf_{n\in \text{supp}(g)}\vert e_n^*(g)\vert$. 
		Taking $z=1_A$ and $y=g+1_B$ in the Property (F$^*$), $f,z$ and $y$ verify the conditions established  in the Property (F$^*$). Then,
		$$\Vert f+1_A\Vert=\Vert f+z\Vert\leq \mathcal F^*\Vert f+y\Vert=\mathcal F^*\Vert f+g+1_B\Vert,$$
		so the basis has the Property (F) with $\mathcal F\leq \mathcal F^*$.

		Assume now that we have the Property (F) and take $f,y$ and $z$ in $\mathbb X_{fin}$ as in the Property (F$^*$), that is, $f\cdot z=0$, $f\cdot y=0$, $z\cdot y =0$, $\max\lbrace \Vert \tilde{f}\Vert_\infty, \Vert \tilde{z}\Vert_\infty\rbrace\leq 1$ and $\vert\text{supp}(z)\vert\leq \vert D\vert$ where $D=\lbrace n : \vert e_n^*(y)\vert=1\rbrace\vert$. Using now Lemmas \ref{guau} and \ref{convex1}, it is enough to prove that there exists $C_1>0$ such that
		$$\Vert f+1_{A'}\Vert\leq C_1\Vert f+y\Vert,\; \forall A'\subseteq A,$$
		where $A=\text{supp}(z)$. Using Property (F), we have that
		\begin{eqnarray}\label{qg}
			\Vert h\Vert\leq \mathcal F\Vert h+w\Vert,
		\end{eqnarray}
		for any $h$ and $w$ such that $h\cdot w=0$ and $\inf_{n\in \text{supp}(w)}\vert e_n^*(w)\vert\geq \Vert\tilde{h}\Vert_\infty$.
		
		Taking $D=\lbrace n : \vert e_n^*(y)\vert=1\rbrace$, observe that $y=P_{D^c}(y)+1_{\eta D}$, where $\eta\equiv \lbrace \text{sign}(e_n^*(y))\rbrace_n$. Then, if $A'\subseteq A$, 
		\begin{eqnarray}\label{pone}
			\Vert f+1_{A'}\Vert \leq \mathcal F\Vert f+P_{D^c}(y)+1_{D}\Vert\leq \mathcal F\left(\Vert f+y\Vert+\Vert 1_{\eta D}\Vert+\Vert 1_{D}\Vert\right).
		\end{eqnarray}
		
		If we decompose $D^c=D_1\cup D_2$ such that 
		$$D_1=\lbrace n : \vert e_n^*(y)\vert <1\rbrace,\;\; D_2=\lbrace n : \vert e_n^*(y)\vert >1\rbrace,$$
		we obtain that
		\begin{eqnarray}\label{ptwo}
			\Vert 1_{\eta D}\Vert \leq \Vert f+P_{D_1}(y)+1_{\eta D}\Vert + \Vert f+P_{D_1}(y)\Vert\underset{\eqref{qg}}{\leq} 2\mathcal F\Vert f+y\Vert.
		\end{eqnarray}
		Following the idea of \eqref{ap}, we can obtain that
		\begin{eqnarray}\label{pthree}
			\Vert 1_{D}\Vert \leq 4\mathcal F\Vert 1_{\eta D}\Vert \underset{\eqref{ptwo}}{\leq} 8\mathcal F^2\Vert f+y\Vert.
		\end{eqnarray}
		
		Using \eqref{ptwo} and \eqref{pthree} in \eqref{pone}, we have
		$$\Vert f+1_{A'}\Vert \leq \mathcal F(1+2\mathcal F+8\mathcal F^2)\Vert f+y\Vert.$$
		Using now Lemma \ref{convex1}, we obtain 
		$$\Vert f+z\Vert\leq \sup_{\varepsilon\in\mathcal E_A}\Vert f+1_{\varepsilon A}\Vert \underset{\text{Lemma}\,\ref{guau}}{\leq}5\sup_{A'\subseteq A}\Vert f+1_{A'}\Vert\leq 5\mathcal F(1+2\mathcal F+8\mathcal F^2)\Vert f+y\Vert.$$
		So the basis has the Property (F*) with $\mathcal F^*\leq 5\mathcal F(1+2\mathcal F+8\mathcal F^2)$.
	\end{proof}
	
	To finish this section, we give the following nice characterization of the Property (F$^*$) that will be useful to show our main theorem.
	
	\begin{prop}\label{prop1}
		Let $\mathcal B$ be a basis in a Banach space $\mathbb X$. The following are equivalent:
		\begin{itemize}
			\item[i)] There is a positive constant $C$ such that
			\begin{eqnarray}\label{c1}
				\Vert f+1_{\varepsilon A}\Vert \leq C\Vert f+g+1_{\eta B}\Vert,
			\end{eqnarray}
			for any $f, g\in\mathbb X_{fin}$ such that $f\cdot g=0$, $\Vert\tilde{f}\Vert_\infty\leq 1$ and $\inf_{n\in\text{supp(g)}}\vert e_n^*(g)\vert\geq \Vert\tilde{f}\Vert_\infty$, for any pair of finite sets $A$ and $B$ such that $A\cap B=\emptyset$, $\vert A\vert\leq\vert B\vert$, $\text{supp}(f+g)\cap (A\cup B)=\emptyset$, and for any $\varepsilon\in\mathcal E_A$, $\eta\in\mathcal E_B$. 
			\item[ii)] The basis has the Property (F*) with constant $\mathcal F^*$.
			\item[iii)] There is a positive constant $C$ such that
			\begin{eqnarray}\label{c2}
				\Vert f\Vert \leq C\Vert f-P_A(f)+y\Vert,
			\end{eqnarray}
			for any $f, y\in\mathbb X_{fin}$ with $f\cdot y=0$ and $A\subseteq \text{supp}(f)$ verifying 
			
			\begin{itemize}
				\item[a)] $ \Vert\tilde{f}\Vert_\infty\leq 1$.
				\item[b)] $\inf_{n\in \text{supp}(y)}\vert e_n^*(y)\vert\geq \Vert\tilde{f}\Vert_\infty$.
				\item[c)] $\vert D\vert\geq \vert A\vert$, where $D=\lbrace n\in\text{supp}(y): \vert e_n^*(y)\vert=1\rbrace$.
			\end{itemize}
		\end{itemize}
		Moreover, if we denote by $C_1$ and $C_2$ the least constants verifying \eqref{c1} and \eqref{c2} respectively, we have
		$$\mathcal F^*\leq C_1,\; C_2\leq \mathcal F^*,\; C_1\leq C_2.$$
	\end{prop}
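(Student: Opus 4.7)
The plan is to establish a cyclic chain of implications $(\mathrm{ii})\Rightarrow(\mathrm{iii})\Rightarrow(\mathrm{i})\Rightarrow(\mathrm{ii})$ with the constant bounds $C_2\leq\mathcal F^*$, $C_1\leq C_2$, and $\mathcal F^*\leq C_1$. Chaining these forces $C_1=C_2=\mathcal F^*$. Each step consists of recognizing the data of one formulation as a specialization of another.

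For $(\mathrm{ii})\Rightarrow(\mathrm{iii})$ (giving $C_2\leq\mathcal F^*$), given $(f,y,A)$ as in (iii), I would write $f=(f-P_A(f))+P_A(f)$ and apply Property (F$^*$) with $z:=P_A(f)$ and the ``$f$'' of (F$^*$) being $f-P_A(f)$. All hypotheses transfer immediately: the supports are disjoint by construction, both $\|\widetilde{P_A(f)}\|_\infty$ and $\|\widetilde{f-P_A(f)}\|_\infty$ are dominated by $\|\tilde f\|_\infty\leq 1$, the cardinality condition reads $|\mathrm{supp}(z)|=|A|\leq |D|$ by (iii)(c), and $\inf_{n\in\mathrm{supp}(y)}|e_n^*(y)|\geq\|\tilde f\|_\infty\geq\|\widetilde{f-P_A(f)}\|_\infty$.

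For $(\mathrm{i})\Rightarrow(\mathrm{ii})$ (giving $\mathcal F^*\leq C_1$), given $(f,z,y)$ satisfying Property (F$^*$), I would bound $\|f+z\|\leq\sup_{\varepsilon\in\mathcal E_{A_0}}\|f+1_{\varepsilon A_0}\|$ via Lemma \ref{convex1}(ii) with $A_0:=\mathrm{supp}(z)$, permissible because $|e_n^*(z)|\leq\|\tilde z\|_\infty\leq 1$. I would simultaneously decompose $y=g+1_{\eta D}$, where $D=\{n\in\mathrm{supp}(y):|e_n^*(y)|=1\}$, $\eta_n=\mathrm{sign}(e_n^*(y))$, and $g=y-1_{\eta D}$. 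For each $\varepsilon$, the tuple $(f,g,A_0,D,\varepsilon,\eta)$ meets the hypotheses of (i); the resulting bound $\|f+1_{\varepsilon A_0}\|\leq C_1\|f+y\|$ then aggregates to $\|f+z\|\leq C_1\|f+y\|$.

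For $(\mathrm{iii})\Rightarrow(\mathrm{i})$ (giving $C_1\leq C_2$), given $(f,g,A,B,\varepsilon,\eta)$ as in (i), I would set $F:=f+1_{\varepsilon A}$ and $y':=g+1_{\eta B}$, keeping $A$ as the distinguished subset of $\mathrm{supp}(F)$. The identity $F-P_A(F)+y'=f+g+1_{\eta B}$ is the crux: it turns (iii) into the desired comparison $\|F\|\leq C_2\|f+g+1_{\eta B}\|$. Here $\|\tilde F\|_\infty\leq 1$ follows from $\|\tilde f\|_\infty\leq 1$ and $|\varepsilon_n|=1$, and the cardinality condition (iii)(c) is met since the unimodular-coordinate set of $y'$ contains $B$ with $|B|\geq|A|$.

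The main subtle point in the third step is the verification of (iii)(b), namely $\inf_{n\in\mathrm{supp}(y')}|e_n^*(y')|\geq\|\tilde F\|_\infty$: although the infimum is at least $\min(\inf|e_n^*(g)|,1)\geq\|\tilde f\|_\infty$, the supremum $\|\tilde F\|_\infty$ may reach $1$ whenever $A\neq\emptyset$. Reconciling this --- either by a normalization/scaling argument that reduces to the case $\|\tilde f\|_\infty=1$ (in which (iii)(b) is automatic from the hypothesis of (i)), or by a careful splitting of $y'$ into its unimodular and non-unimodular layers together with the cardinality hypothesis $|B|\geq|A|$ --- is the technical heart of the argument and is what delivers the tight constant $C_1\leq C_2$.
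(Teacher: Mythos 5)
Your implications (i)$\Rightarrow$(ii) and (iii)$\Rightarrow$(i) follow the paper's route. For (ii)$\Rightarrow$(iii) you diverge slightly: you take $z:=P_A(f)$ directly, whereas the paper takes $z:=1_{\varepsilon A}$ and then averages over $\varepsilon\in\mathcal E_A$ via Lemma~\ref{convex1}(ii). Your choice is a genuine simplification and is correct, since $\|\widetilde{P_A(f)}\|_\infty\le\|\tilde f\|_\infty\le 1$ already meets the modulus constraint of Property (F$^*$), so no convexity step is needed and the constant $C_2\le\mathcal F^*$ comes out immediately.

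The subtlety you flag in (iii)$\Rightarrow$(i) is, however, a real gap, and it is one the published proof also steps over in silence. With $F:=f+1_{\varepsilon A}$ and $y':=g+1_{\eta B}$, hypothesis (iii)(b) requires $\inf_{n\in\mathrm{supp}(y')}|e_n^*(y')|\ge\|\tilde F\|_\infty$; once $A\ne\emptyset$ the right-hand side equals $1$, while the left-hand side is only guaranteed to be $\ge\|\tilde f\|_\infty$, which may be $<1$ (for example $f=\tfrac12 e_1$, $g=\tfrac12 e_2$, $A=\{3\}$, $B=\{4\}$). Neither of your suggested repairs works as sketched: rescaling $f$ so that $\|\tilde f\|_\infty=1$ simultaneously rescales $1_{\varepsilon A}$ and $1_{\eta B}$ out of the unit-coefficient class that (i) requires, and absorbing the sub-unit layer of $y'$ into $F$ alters the quantity $\|F\|$ you are trying to control. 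The clean fix is structural: hypothesis (iii)(b) should read with $\|\widetilde{f-P_A(f)}\|_\infty$ in place of $\|\tilde f\|_\infty$, i.e.\ the bound should involve only the part of $f$ that survives off $A$. Under that reading, your (ii)$\Rightarrow$(iii) still works (condition iv) of (F$^*$) only ever compares against $\|\widetilde{f-P_A(f)}\|_\infty$ once $z$ lives on $A$), (iii)$\Rightarrow$(i) goes through because $F-P_A(F)=f$ yields exactly $\inf_{\mathrm{supp}(y')}|e_n^*(y')|\ge\|\tilde f\|_\infty$, and the one use of (iii) in the proof of Theorem~\ref{main} (where the relevant $y$ has all coefficients of modulus $1$) is unaffected. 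Until such a precise repair is made, the bound $C_1\le C_2$ is not established, so the argument as written is incomplete.
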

	
	\begin{proof}
		First, we prove i) $\Rightarrow$ ii). Take $f,z,y\in\mathbb X_{fin}$ as in the definition of the Property (F$^*$):
		
		\begin{itemize}
			\item $f\cdot y=0$, $f\cdot z=0$, $z\cdot y=0$,
			\item  $\max\lbrace\Vert\tilde{f}\Vert_\infty, \Vert \tilde{z}\Vert_\infty\rbrace\leq 1$ .
			\item $\vert D\vert\geq \vert \text{supp}(z)\vert$, where $D=\lbrace n : \vert e_n^*(y)\vert=1\rbrace$.
			\item $\inf_{n\in \text{supp}(y)}\vert e_n^*(y)\vert\geq \Vert\tilde{f}\Vert_\infty$.
		\end{itemize}
		
		If $z=0$, just take $A=B=\emptyset$ and the prove is over. Consider now that $z\neq 0$ and take $\text{supp}(z)=A$. If we divide $y=1_{\eta D}+P_{D^c}(y)$ with $\eta\equiv\lbrace \text{sign}(e_n^*(y))\rbrace$, we have for all $\varepsilon\in\mathcal E_A$,
		\begin{eqnarray}
			\Vert f+1_{\varepsilon A}\Vert \leq C_1\Vert f+P_{D^c(y)}+1_{\eta D}\Vert=C_1\Vert f+y\Vert
		\end{eqnarray}
		Applying now Lemma \ref{convex1}, we obtain the result with $\mathcal F^*\leq C_1$.
		
		Now, we show that ii) $\Rightarrow$ iii). Of course, if $A=\emptyset$, the result is trivial. Take $f,y$ and $A$ as in  iii) with $A\neq\emptyset$ and $A\subseteq\text{supp}(f)$. If in the Property (F*) we take $f'=f-P_A(f)$, $z'=1_{\varepsilon A}$ with $\varepsilon\in\mathcal E_A$ and $y'=y$,
		$$\Vert f'+z'\Vert=\Vert f-P_A(f)+1_{\varepsilon A}\Vert \leq\mathcal F^*\Vert f'+y'\Vert=\mathcal F^*\Vert f-P_A(f)+y\Vert,$$
		so applying  the item (ii) of Lemma \ref{convex1}, iii) is proved with $C_2\leq \mathcal F^*$.
		
		Finally, we make the proof to show that iii) $\Rightarrow$ i). Take $f,g\in\mathbb X_{fin}$ such that $f\cdot g=0$, $\Vert\tilde{f}\Vert_\infty\leq\inf_{n\in\text{supp}(g)}\vert e_n^*(g)\vert$, $\vert A\vert\leq\vert B\vert<\infty$, $A\cap B=\emptyset$, $\text{supp}(f+g)\cap(A\cup B)=\emptyset$ and $\varepsilon\in\mathcal E_A$, $\eta\in\mathcal E_B$.
		
		Taking $f'=f+1_{\varepsilon A}$ and $y=g+1_{\eta B}$,
		$$\Vert f+1_{\varepsilon A}\Vert =\Vert f'\Vert\leq C_2\Vert f'-P_A(f')+y\Vert= C_2\Vert f+g+1_{\eta B}\Vert,$$
		so the proof is over and $C_1\leq C_2$.
	\end{proof}
	%
	%\begin{Remark}\label{rem}
	%	In the last proposition, in (iii), we can replace the inequality
	%	$$\Vert f\Vert \leq C_1\Vert f-P_A(f)+y\Vert,$$
	%	by
	%	$$\Vert f\Vert \leq C_1\Vert f-P_A(f)+ty\Vert,$$
	%	where $t\geq \max_{n\in\text{supp}(f)}\Vert {f}\Vert_\infty$. This is trivial since if we assume (iii), if $t\geq\Vert\tilde{f}\Vert_{\infty}$, 
	%	$$\left\Vert \frac{f}{t}\right\Vert \leq C_1\left\Vert \frac{f}{t}-P_A\left(\frac{f}{t}\right)+y\right\Vert,$$
	%	so we obtain 
	%	$$\Vert f\Vert \leq C_1\Vert f-P_A(f)+ty\Vert.$$
	%\end{Remark}

	\section{Proof of Theorem \ref{main}}\label{ff1}
	To prove Theorem \ref{main}, we will use one of the most important tools in the world of quasi-greedy bases: the truncation operator. To define this operator, we take $\alpha>0$ and define, first of all, the $\alpha$-truncation of $z\in\mathbb C$:
	$$T_\alpha(z)=\alpha\text{sign}(z),\;\; \text{if}\, \vert z\vert\geq \alpha,$$
	and
	$$T_\alpha(z)=z,\;\;\text{if}\, \vert z\vert\leq \alpha.$$
	
	Now, it is possible to extend $T_\alpha$ to an operator in the space $\mathbb X$ by
	$$T_\alpha(f)=\sum_{n\in\text{supp}(f)}T_\alpha(e_n^*(f))e_n=\sum_{n\in \Delta_\alpha}\alpha\frac{e_n^*(f)}{\vert e_n^*(f)\vert}e_n + \sum_{n\not\in\Delta_\alpha}e_n^*(f)e_n,$$
	where the set $\Delta_\alpha=\lbrace n\in\mathbb N : \vert e_n^*(f)\vert>\alpha\rbrace$. Of course, since $\Delta_\alpha$ is a finite set, $T_\alpha$ is well-defined for all $f\in\mathbb X$.
	\begin{lem}{\cite[Lemma 2.5]{BBG}}\label{trunc}
		Let $\mathcal B$ be a $C_q$-quasi-greedy basis in a Banach space. Then, the truncation operator is uniformly bounded that is,
		$$\Vert T_\alpha(f)\Vert\leq C_q\Vert f\Vert,\; \forall \alpha>0,\; \forall f\in\mathbb X.$$
	\end{lem}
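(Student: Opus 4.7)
The plan is to realize $T_\alpha(f)$ as an explicit convex combination of $f$ and the residuals $f-\mathcal{G}_j(f)$ of greedy sums of increasing orders, and then apply the quasi-greedy inequality termwise. The nonlinearity of the truncation $T_\alpha$ is thereby factored through a finite linear combination of objects to which the QG hypothesis applies directly.

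Fix $f\in\mathbb{X}$ and $\alpha>0$. Since $(e_n^*(f))_n\in c_0$, the set $A=\Delta_\alpha$ is finite; I enumerate $A=\{n_1,\dots,n_K\}$ so that $a_1\ge a_2\ge\cdots\ge a_K>\alpha$, where $a_i=|e_{n_i}^*(f)|$, and set $\sigma_i=\mathrm{sign}(e_{n_i}^*(f))$. For every $j=0,1,\dots,K$, the prefix $\{n_1,\dots,n_j\}$ is a valid greedy set of $f$ of order $j$, so defining $f_j:=f-\mathcal{G}_j(f)$ one has $f_0=f$ and, by quasi-greediness, $\|f_j\|\le C_q\|f\|$ for each $j\ge 1$.

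Next I introduce the weights
$$c_0=\frac{\alpha}{a_1},\qquad c_j=\frac{\alpha}{a_{j+1}}-\frac{\alpha}{a_j}\ \ (1\le j\le K-1),\qquad c_K=1-\frac{\alpha}{a_K}.$$
These are manifestly nonnegative (since $a_j\ge a_{j+1}$ and $a_K>\alpha$) and sum to $1$ by telescoping. The crucial identity is $T_\alpha(f)=\sum_{j=0}^K c_j f_j$, which by biorthogonality reduces to checking coordinates: for $n=n_i\in A$ the right-hand side contributes $a_i\sigma_i\sum_{j<i}c_j=a_i\sigma_i(\alpha/a_i)=\alpha\sigma_i=e_{n_i}^*(T_\alpha f)$ (the partial sums of the $c_j$ telescope to $\alpha/a_i$), while for $n\in A^c$ it contributes $e_n^*(f)\sum_j c_j=e_n^*(f)=e_n^*(T_\alpha f)$, the coordinate being untouched by truncation because $|e_n^*(f)|\le\alpha$.

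Separating the $j=0$ term and applying the triangle inequality,
$$\|T_\alpha(f)\|\le c_0\|f\|+\sum_{j=1}^K c_j\|f_j\|\le c_0\|f\|+(1-c_0)C_q\|f\|\le C_q\|f\|,$$
where the last step uses $C_q\ge 1$. The main obstacle is finding the correct interpolation weights $c_j$; once the telescoping identity $\sum_{j<i}c_j=\alpha/a_i$ is recognized as precisely what reduces the $n_i$-coordinate of the combination to $\alpha\sigma_i$, the remaining estimate is just the triangle inequality applied to a convex combination.
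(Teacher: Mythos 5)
Your proof is correct and is essentially the Abel-summation argument behind the cited reference \cite[Lemma 2.5]{BBG}; note that the present paper states this lemma and cites that source without reproducing a proof. Writing $T_\alpha(f)=\sum_{j=0}^K c_j\,(f-\mathcal G_j(f))$ via the telescoping weights $c_j$, checking the identity coordinatewise, and then using $C_q\ge 1$ to absorb the $j=0$ term is exactly the standard route to the sharp constant $C_q$, and all steps (nonnegativity and summability of the $c_j$, validity of $\{n_1,\dots,n_j\}$ as a greedy set of order $j$, and the final triangle-inequality estimate) are carried out correctly.
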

	\begin{proof}[Proof of Theorem \ref{main}]
		Assume that $\mathcal B$ is almost-greedy with constant $C_{al}$ and take $f,z$ and $y$ as in the Property (F$^*$) and decompose $y=P_{B_1}(y)+P_{B_2}(y)+1_{\eta B}$, where $\eta\equiv\lbrace\text{sign}(e_n^*(y))\rbrace$, $B_1\cup B_2=B^c$ and
		$$B_1=\lbrace n : \vert e_n^*(y)\vert<1\rbrace,\;\; B_2=\lbrace n : \vert e_n^*(y)\vert>1\rbrace.$$
		Taking now $h:=f+1_{\varepsilon A}+P_{B_2}(y)+1_{\eta B}$ with $A=\text{supp}(z)$, $\varepsilon\in\mathcal E_A$ and $n=\vert B_2\vert+\vert B\vert$, we obtain
		\begin{eqnarray*}
			\Vert f+1_{\varepsilon A}\Vert &=& \Vert h-\mathcal G_n(h)\Vert \leq C_{al}\Vert h-P_A(h)\Vert=C_{al}\Vert f+P_{B_2}(y)+1_{\eta B}\Vert\\
			&\leq& C_{al}(\Vert f+y\Vert+\Vert P_{B_1}(y)\Vert)\leq C_{al}(\Vert f+y\Vert +\Vert f+P_{B_1}(y)\Vert+\Vert f\Vert)\\
			&\leq& C_{al}(\Vert f+y\Vert + 2C_{al}\Vert f+y\Vert)\\
			&\leq& C_{al}(1+2C_{al})\Vert f+y\Vert.
		\end{eqnarray*}
		Thus, applying Lemma \ref{convex1}, the basis has the Property (F$^*$) with constant $\mathcal F^*\leq C_{al}(1+2C_{al})$.
		
		Assume now that the basis has the Property (F$^*$). Take $f\in\mathbb X_{fin}$, $m\in\mathbb N$, $\mathcal G_m(f)=P_G(f)$ and $\vert A\vert\leq m$.
		
		Consider now the elements $f'=\frac{1}{t}(f-\mathcal G_m(f))$ with $t=\min_{n\in G\setminus A}\vert e_n^*(f)\vert$,  $B=A\setminus G$, $y=1_{\eta(G\setminus A)}$ and  $\eta\equiv\lbrace \text{sign}(e_n^*(f))\rbrace$. Of course, $f'\cdot y=0$, $\Vert\tilde{f'}\Vert_\infty \leq 1$ since $\vert e_n^*(f-\mathcal G_m(f))\vert\leq t$ for $n\in G^c$ and $\vert G\setminus A\vert\geq \vert B\vert$. Then, applying these elements in the item iii) of Proposition \ref{prop1}, we obtain the following:
		\begin{eqnarray}\label{main1}
			\nonumber\Vert f-\mathcal G_m(f)\Vert &=&t\Vert f'\Vert\leq t\mathcal F^*\Vert f'-P_{B}(f')+y\Vert\\
			\nonumber	&=& \mathcal F^*\Vert f-P_G(f)-P_{A\setminus G}(f)+ t1_{\eta(G\setminus A)}\Vert\\
			&=&\mathcal F^*\Vert P_{(A\cup G)^c}(f-P_A(f))+t1_{\eta(G\setminus A)}\Vert.
		\end{eqnarray}
		%	 \mathcal F^*\Vert f-P_G(f)-P_{A\setminus G}(f)+\alpha 1_{\eta(G\setminus A)}\Vert\\
		%	&=&\mathcal F^*\Vert P_{(A\cup G)^c}(f-P_A(f))+\alpha1_{\eta(G\setminus A)}\Vert.
		Since the Property (F$^*$) implies that the basis is quasi-greedy with $C_q\leq\mathcal F^*$ (Theorems \ref{p1} and \ref{p2}), applying Lemma \ref{trunc},
		\begin{eqnarray}\label{main2}
			\Vert P_{(A\cup G)^c}(f-P_A(f))+t 1_{\eta(G\setminus A)}\Vert=\Vert T_t(f-P_A(f))\Vert\leq \mathcal F^*\Vert f-P_A(f)\Vert.
		\end{eqnarray}
		Thus, by \eqref{main2} and \eqref{main1}, the basis is almost-greedy with constant $C_{al}\leq  (\mathcal F^*)^2$ for elements $f\in\mathbb X_{fin}$. Now, applying  Corollary \ref{bb3}, the results follows.
	\end{proof}
	
	\section{Properties (F$_p$) and (F$_p^*$)}\label{ff2}
	
	%\begin{Definition}
	%	We say that $\mathcal B$ is partially-greedy if there is positive constant $C$ such that
	%	\begin{eqnarray}\label{partially}
	%	\Vert f-\mathcal G_m(f)\Vert \leq C\inf_{k\leq m}\Vert f-S_k(f)\Vert,\; \forall m\in\mathbb N, \forall f\in\mathbb X.
	%	\end{eqnarray}
	%	The least constant verifying \eqref{partially} is denoted by $C_p=C_p[\mathbb X,\mathcal B]$ and we say that $\mathcal B$ is $C_p$-partially-greedy.
	%\end{Definition}
	
	%\begin{Remark}\label{pig}
	%	It is a well known and easy fact that a $C_p$-partially-greedy basis $\mathcal{B}$ is also a $C_p$-quasi-greedy basis. Just take $k=0$ in the definition of partially-greedy bases.
	%\end{Remark}
	%
	%As we have done for almost-greedy bases, we will introduce the following property to characterize partially-greediness.
	%
	%\begin{Definition}
	%	We say that $\mathcal B$ has the Property (F$_p$) if there is positive constant $C$ such that
	%	\begin{eqnarray}
	%		\Vert f+1_A\Vert\leq C\Vert f+1_B+y\Vert,
	%	\end{eqnarray}
	%for any $f,g\in\mathbb X_{fin}$, $f\cdot g=0$,	$\Vert \tilde{f}\Vert_\infty\leq 1$ and $\Vert \tilde{f}\Vert_\infty\leq\min_{n\in\text{supp}(g)}\vert e_n^*(g)\vert$ and for any pair of sets $\vert A\vert\leq\vert B\vert<\infty$, $A\cap B=\emptyset$,  $\text{supp}(f+g)\cap (A\cup B)=\emptyset$, $A<\text{supp}(f+g)\cup B$.
	%\end{Definition}
	In all the results presented in Section \ref{ff} we can change democracy by conservativeness or super-conservativeness and Property (F) and (F$^*$) by Property (F$_p$) and (F$_p^*$) and obtain the same results. Here, we only present the fundamental theorem that is the version of Theorem \ref{p1} to study how the constants change.
	\begin{thm}\label{new}
		A basis $\mathcal B$ in a Banach space $\mathbb X$ has the Property (F$_p$) if and only if $\mathcal B$ is quasi-greedy and conservative. Moreover,
		$$\max\lbrace \Delta_c, C_q\rbrace\leq \mathcal F_p\leq 2+C_q+2C_q\Delta_c.$$
	\end{thm}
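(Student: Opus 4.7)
The proof mirrors Theorem~\ref{p1}(3), with conservativeness replacing democracy throughout. The key geometric observation is that the order condition $A<\text{supp}(g)\cup B$ built into (F$_p$) delivers the ordering $A<B$ required by conservativeness, while in the test configurations used to extract quasi-greediness from (F$_p$), the order condition is either vacuous (when $A=\emptyset$) or matches the hypothesis directly (when $f=g=0$).

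\textbf{Necessity.} To extract $\Delta_c$, fix finite sets $C<D$ with $|C|\leq|D|$ and instantiate (F$_p$) with $f=g=0$, $A=C$, $B=D$; the ordering condition reduces to $C<D$, yielding $\|1_C\|\leq\mathcal F_p\|1_D\|$ and hence $\Delta_c\leq\mathcal F_p$. To extract $C_q$, fix $f\in\mathbb X_{fin}$ and $m\in\mathbb N$, set $t=\|\tilde f\|_\infty$, and apply (F$_p$) with $f'=(f-\mathcal G_m(f))/t$, $g'=\mathcal G_m(f)/t$, $A=B=\emptyset$. The greedy-set property of $A_m(f)$ guarantees $\|\tilde{f'}\|_\infty\leq 1\leq\inf_{n\in\text{supp}(g')}|e_n^*(g')|$, and the ordering is vacuous since $A=\emptyset$. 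Rescaling by $t$ and invoking Corollary~\ref{bb2} to pass from $\mathbb X_{fin}$ to $\mathbb X$ gives $C_q\leq\mathcal F_p$.

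\textbf{Sufficiency.} Assume $C_q$-quasi-greedy and $\Delta_c$-conservative, fix $f,g,A,B$ as in (F$_p$), and set $h:=f+g+1_B$. Starting from
\[
\|f+1_A\|\leq\|h\|+\|g+1_B\|+\|1_A\|,
\]
the middle term is bounded by $(1+C_q)\|h\|$ because $\text{supp}(g)\cup B$ is a greedy set of $h$ (its coefficients dominate $\|\tilde f\|_\infty$, which in turn dominates the coefficients of $f$ outside), and the last term is bounded by $\Delta_c\|1_B\|$ by conservativeness applied to the pair $A<B$ with $|A|\leq|B|$. The decisive remaining step is a sharp estimate on $\|1_B\|$: splitting $g=g_1+g_2$ at threshold $1$ makes $\text{supp}(g_1)$ a greedy set of $h$ and $B$ a greedy set of the residual $f+g_2+1_B$, and a single-pass truncation argument through the operator $T_1$ of Lemma~\ref{trunc} (whose norm satisfies $\|T_1(h)\|\leq C_q\|h\|$) yields $\|1_B\|\leq 2C_q\|h\|$. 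Assembling the three bounds gives $\mathcal F_p\leq 2+C_q+2C_q\Delta_c$.

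\textbf{Main obstacle.} The tight bound $\|1_B\|\leq 2C_q\|h\|$ is the delicate point. Iterating quasi-greediness naively---first removing $g_1$ from $h$ as a greedy set, then extracting $1_B$ as a greedy set of the residual $f+g_2+1_B$---produces only $(1+C_q)C_q$, which would yield the weaker estimate $\mathcal F_p\leq 2+C_q+\Delta_c C_q(1+C_q)$. The improvement to $2C_q$ relies on collapsing the two quasi-greedy losses into a single pass by working with $T_1(h)=f+1_{\eta\text{supp}(g_1)}+g_2+1_B$, whose magnitude-$1$ structure on $\text{supp}(g_1)\cup B$ gives the needed control in one shot rather than two. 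The routine passage from $\mathbb X_{fin}$ to $\mathbb X$ in the final step is handled by the density results collected in Section~\ref{ff4}.
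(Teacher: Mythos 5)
Your overall plan and your necessity direction match the paper exactly (instantiating $(F_p)$ at $A=\emptyset$, resp.\ at $f=g=0$; and, as you note, the normalization by $t=\|\tilde f\|_\infty$ is needed to legitimately invoke the condition $\|\tilde{f'}\|_\infty\le 1$, which the paper's own statement of this step elides). The chain in the sufficiency direction also reproduces the paper's constant. However, the step you yourself flag as decisive --- $\|1_B\|\le 2C_q\|f+g+1_B\|$ via ``a single-pass truncation argument through $T_1$'' --- is not justified, and I don't see how to make the $T_1$ route give $2C_q$. With your labelling ($g_1$ = coefficients $\ge 1$, $g_2$ = coefficients $<1$), $T_1(h)=f+1_{\eta\,\mathrm{supp}(g_1)}+g_2+1_B$, and every natural estimate from $\|T_1(h)\|\le C_q\|h\|$ controls either $\|1_{\eta\,\mathrm{supp}(g_1)}+1_B\|$ or $\|f+g_2\|$, not $\|1_B\|$ alone; extracting $1_B$ from $T_1(h)$ costs an extra greedy step and lands you back at $(1+C_q)C_q$ (or worse), which is exactly the ``naive'' bound you were trying to avoid. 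You would also need a sign-free $1_{\mathrm{supp}(g_1)\cup B}$ to feed into conservativeness, which $T_1$ does not give you for free.

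The paper's argument for this step is simpler and avoids $T_1$ entirely. With the opposite labelling ($g_1$ = coefficients $<1$, $g_2$ = coefficients $\ge1$), write
\[
\|1_B\|\;\le\;\|f+g_1+1_B\|+\|f+g_1\|,
\]
and observe that $f+g_1+1_B = h-P_{\mathrm{supp}(g_2)}(h)$ and $f+g_1 = h-P_{\mathrm{supp}(g_2)\cup B}(h)$, with both $\mathrm{supp}(g_2)$ and $\mathrm{supp}(g_2)\cup B$ greedy sets of $h=f+g+1_B$ (coefficients $\ge 1$ there, $\le 1$ elsewhere, using $\|\tilde f\|_\infty\le 1$). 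Two direct applications of quasi-greediness then give $\|1_B\|\le 2C_q\|h\|$, after which your assembly of the three bounds is correct and gives $\mathcal F_p\le 2+C_q+2C_q\Delta_c$. So replace the truncation step with this two-term triangle-inequality decomposition and the proof closes.
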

	\begin{proof}
		Assume that $\mathcal B$ has the Property (F$_p$) with constant $\mathcal F_p$. Taking $A=\emptyset$, we have that 
		\begin{eqnarray}
			\Vert f\Vert\leq C\Vert f+1_B+g\Vert,
		\end{eqnarray}
		for any $f, g$ and $B$ as in the definition of the Property (F$_p$). Now, taking $B=\emptyset$ and considering $f':=f-\mathcal G_m(f)$ and $y=\mathcal G_m(f)$, 
		\begin{eqnarray}
			\Vert f-\mathcal G_m(f)\Vert=\Vert f'\Vert\leq \mathcal F_p\Vert f'+g\Vert=\mathcal F_p\Vert f\Vert,
		\end{eqnarray}
		so the basis is quasi-greedy for elements with finite support. Applying now Corollary \ref{bb2}, the basis is quasi-greedy with $C_q\leq\mathcal F_p$. Now, on the other hand, taking $f=g=0$, we obtain conservativeness with constant $\Delta_{c}\leq \mathcal F_p$. 
		
		Now, take $f, g, A$ and $B$ as in the definition of Property (F$_p$). If we have $g=g_1+g_2$ where
		$$\text{supp}(g_1)=\lbrace n\in\text{supp}(g) : \vert e_n^*(g)\vert<1\rbrace,$$
		\begin{eqnarray*}
			\Vert f+1_A\Vert &\leq& \Vert f+g+1_B\Vert+\Vert g+1_B\Vert+\Vert 1_A\Vert\\
			&\leq&  2\Vert f+g+1_B\Vert+\Vert f \Vert+\Vert 1_A\Vert\\
			&\leq& (2+C_q)\Vert f+g+1_B\Vert+\Delta_c\Vert 1_B\Vert\\
			&\leq& (2+C_q)\Vert f+g+1_B\Vert+\Delta_c\Vert f+g_1+1_B\Vert+\Delta_c\Vert f+g_1\Vert\\
			&\leq& (2+C_q)\Vert f+g+1_B\Vert+2C_q\Delta_c\Vert f+g+1_B\Vert\\
			&=&(2+C_q+2C_q\Delta_c)\Vert f+g+1_B\Vert.
		\end{eqnarray*}
		%	Now, consider the set $D=[1,\cdots,m]\setminus A$ and define the element
		%	$$z:=f+1_A+(1+\delta)1_{B\cup D}+y_2,$$
		%	where $\text{supp}(y_1)=\lbrace n\in\text{supp}(y) : \vert e_n^*(y)\vert>1\rbrace$. Then, a greedy set for $z$ is the set $\text{supp}(y_2)\cup (B\cup D)$ with cardinality $m$. Thus,
		%	\begin{eqnarray}
		%		\Vert f+1_A\Vert=\Vert z-\mathcal G_m(z)\Vert \leq 
		%	\end{eqnarray} 
	\end{proof}
	%
	%\begin{Lemma}\label{l1}
	%A basis $\mathcal B$ has the Property (F$_p^*$) if and only if there is a positive constant $C$ such that
	%\begin{eqnarray}\label{mod}
	%	\Vert f+z\Vert\leq C\Vert f+ty\Vert,
	%\end{eqnarray}
	%for any $f,z,y\in\mathbb X_{fin}$ and $t\in\mathbb R$ satisfying the following conditions:
	%
	%\begin{itemize}
	%	\item[i)] $f\cdot z=0$, $f\cdot y=0$, $z\cdot y=0$,
	%	\item[ii)]  $\max\lbrace\Vert\tilde{f}\Vert_\infty,\Vert \tilde{z}\Vert_\infty\rbrace\leq t$.
	%	\item[iii)] $\vert D\vert\geq \vert \text{supp}(z)\vert$, where $D=\lbrace n\in \text{supp}(y) : \vert e_n^*(y)\vert=1\rbrace$.
	%	\item[iv)] $\inf_{n\in \text{supp}(y)}\vert e_n^*(y)\vert\geq \Vert\tilde{f}\Vert_\infty$.
	%	\item[v)] $\text{supp(z)}<\text{supp}(f+y)$.
	%\end{itemize}
	%Moreover, if $\mathbf C$ is the least constant verifying \eqref{mod}, $\mathcal F_p^*=\mathbf C$.
	%\end{Lemma}
	%\begin{proof}
	%If \eqref{mod} is satisfied, taking $t=1$ we obtain the Property (F$_p^*$). Assume now that we have the Property (F$_p^*$) and consider $f,z,y$ and $t$ as in the conditions of the inequality \eqref{mod}. If we take the following elements,
	%$$f':=\dfrac{f}{t},\; z'=\dfrac{z}{t},$$
	%these elements with $y$ verify the conditions of the Property (F$_p^*$), then
	%$$\Vert f+z\Vert = t\Vert f'+z'\Vert \leq \mathcal F_p^*\,t \Vert f'+y\Vert=\mathcal F_p^*\Vert f+ty\Vert.$$
	%\end{proof}
	
	\section{Proof of Theorem \ref{maintwo}}\label{ff3}

	%\begin{proof}
	%Assume that $\mathcal B$ is partially-greedy and show b).
	%
	%$$\Vert \mathcal G_m(f)\Vert\leq (1+C)\Vert f\Vert.$$
	%Take $z:=f+1_{\varepsilon A}+(1+\delta)1_{\eta(B\cup D)}+y_2$. 
	%\begin{eqnarray*}
	%	\Vert f+1_{\varepsilon A}\Vert &=& \Vert z-\mathcal G_m(z)\Vert\leq C_p\Vert f+(1+\delta)1_{\eta B}+y_2\Vert \leq C_p\Vert f\Vert+C_p\Vert 1_{\eta B}+y_2\Vert \\
	%	&\leq& C_p^2\Vert f+y+1_{\eta B}\Vert +(1+C_p)C_p\Vert  f+y+1_{\eta B}\Vert\\
	%	&=&C_p(2C_p+1)\Vert  f+y+1_{\eta B}\Vert.
	%\end{eqnarray*}
	%
	%To show that b) implies c), we only have to apply Lemma \ref{convex1}.
	%
	%\end{proof}
	
	%Before proceed with the proof of this Theorem, let us note a simple relation that will be used.
	%\begin{Lemma}\label{big}
	%The Property (F$_p^*$) implies quasi-greediness with the same constant.
	%\end{Lemma}
	%\begin{proof}
	%	Consider an element $f\in\mathbb X_{fin}$ with $\Vert \tilde{f}\Vert_{\infty}\leq 1$ and $t=1$. Use $f':=f-\mathcal G_m(f)$ and $y:=\mathcal G_m(f)$ as $f$ and $y$ in condition b), and an element $z$ with $\text{supp}(z)=\emptyset$. Hence, we get $\Vert f-\mathcal G_m(f)\Vert\leq C\Vert f\Vert$. If $t>1$, just use $\frac{f}{t}$ as $f$ and we obtain the same result. For elements $f$ with no finite support, apply Corollary \ref{bb2}.
	%\end{proof}
	\begin{proof}[Proof of Theorem \ref{maintwo}]
		Assume now that $\mathcal{B}$ is $C_p$-partially-greedy and prove the Property (F$^*_p$). Take $f,z,y\in\mathbb X_{fin}$ satisfying from $i)$ to $iv)$ in the definition of Property (F$_p^*$). We write $y=1_{\eta D}+y_1+y_2$, where
		$$\text{supp}(y_1)=\lbrace n\in\text{supp}(y) : \vert e_n^*(y)\vert<1\rbrace,\;\text{supp}(y_2)=\lbrace n\in\text{supp}(y) : \vert e_n^*(y)\vert>1\rbrace,$$
		and $\eta\equiv\lbrace \text{sign}(e_n^*(y))\rbrace$. Consider $A=\text{supp}(z)$.
		
		If $A=\emptyset$, applying Theorem \ref{mp}, the basis is quasi-greedy with $C_q\leq C_p$ and we can conclude that $\Vert f\Vert\leq \Vert f+y\Vert$.
		
		Assume now that $A\neq\emptyset$ and consider $m=\text{max} A$ and define $B=\{1,\ldots,m\}\setminus A$. It is clear that $m=\vert A\cup B\vert \leq \vert B\cup D \vert$. Now, for any choice $\varepsilon\in\mathcal E_A$, define $h:=f+1_{\varepsilon A}+y_2+1_{\eta D}+1_B$.
		
		Since partially-greediness implies quasi-greediness with constant $C_p$ (see Theorem \ref{mp}), we have 
		\begin{eqnarray*}
			\Vert f+1_{\varepsilon A}\Vert &=& \Vert h-\mathcal G_m(h)\Vert\leq C_p \inf_{k\leq m}\Vert h-S_k(h)\Vert \leq C_p\Vert f+y_2+1_{\eta D}\Vert \\
			&\leq& C_p \left( \Vert f \Vert + \Vert y_2+1_{\eta D} \Vert\right).
		\end{eqnarray*}
		For the first element of the sum, consider $w:=f+y$ and we have  $$\Vert f \Vert=\Vert w-\mathcal G_n(w)\Vert,$$ with $n=\vert\text{supp}(y)\vert$. Then, applying quasi-greediness, we obtain $\Vert f\Vert\leq C_p\Vert f+y\Vert$. For the second one, we write $w=f+y_1+y_2+1_{\eta D}$ and using quasi-greediness, we have $$\Vert y_2+ 1_{\eta D}\Vert=\Vert \mathcal G_m(w)\Vert\leq(1+C_p)\Vert f+y\Vert,$$
		where $m=\vert \text{supp}(y_2)\cup D\vert$.
		
		Using both bounds, we obtain $\Vert f+1_{\varepsilon A}\Vert\leq C_p\left(1+2C_p\right)\Vert f+y\Vert$. Because of Lemma\ref{convex1}, we conclude that $\Vert f+z\Vert\leq C_p\left(1+2C_p\right)\Vert f+y\Vert$.

		Prove now b). Without loss of generality we can assume that $f\in\mathbb X_{fin}$ using Corollary \ref{bb4} and that $\Vert \tilde{f}\Vert_\infty\leq 1$. Start considering $A=\text{supp}(\mathcal G_m(f))$, $k\leq m$ and $B=\{1,\ldots,k\}$. If $A=B$, then the result is trivial. If $A\neq B$, we can decompose $$f-\mathcal{G}_m(f)=P_{(A\cup B)^c}(f-S_k(f))+P_{B\setminus A}(f).$$

		Let $f'=\frac{1}{t}P_{(A\cup B)^c}(f-S_k(f))$ and $z=\frac{1}{t}P_{B\setminus A}(f)$ with $t=\min_{n\in A}\vert e_n^*(f)\vert$ and $y=1_{\varepsilon (A\setminus B)}$ with $\varepsilon\equiv\lbrace\text{sign}(e_n^*(f)\rbrace$. Of course, $f'\cdot z=0$, $f'\cdot y=0$ and $y\cdot z=0$, $\Vert \tilde{f'}\Vert_\infty\leq 1$ since $\vert e_n^*(P_{(A\cup B)^c}(f-S_k(f)))\vert\leq t$ for $n\in (A\cup B)^c$ and $\vert A\setminus B\vert\geq \vert B\setminus A\vert$. Then, $f', z$ and $y$ verify the items of the Property (F$_p^*$), so
		\begin{eqnarray*}
			\Vert f-\mathcal G_m(f)\Vert = t\Vert f'+z\Vert\leq t\mathcal F_p^*\Vert f'+1_{\varepsilon(A\setminus B)}\Vert=\mathcal F_p^*\Vert P_{(A\cup B)^c}(f-S_k(f))+t1_{\varepsilon(A\setminus B)}\Vert.
		\end{eqnarray*}
		
		It turns out that $$P_{(A\cup B)^c}(f-S_k(f)) +t1_{\varepsilon A\setminus B}=T_t(f-S_k(f)),$$ where $T_t$ is the $t$-truncation operator. Now, since the Property (F$_p^*$) implies the Property (F$_p$) with the same constant, because of Theorem \ref{new} and Corollary \ref{bb2}, the basis is quasi-greedy with constant $C_q\leq\mathcal F_p^*$. Then, applying Lemma \ref{trunc}, we have that $\Vert T_t(f-S_k(f))\Vert\leq \mathcal F_p^*\Vert f-S_k(f)\Vert$. All together, we obtain $\Vert f-\mathcal{G}_m(f)\Vert\leq (\mathcal F_p^*)^2\Vert f-S_k(f)\Vert$ for all $k\leq m$ and hence, $\mathcal{B}$ is partially-greedy.
	\end{proof}
	
	\section{Annex}\label{ff4}
	In this annex, we write the main lemmas about density that we use in the paper.
	
	\begin{lem}{\cite[Lemma 7.2]{BB2}}\label{bb1}
		Let $\mathcal B$ be a basis for a Banach space $\mathbb X$. If $A$ is a greedy set for $f\in\mathbb X$, for every $\varepsilon>0$, there is $y\in\mathbb X_{fin}$ such that $\Vert f-y\Vert\leq \varepsilon$ and $A$ is a greedy set for $y$.
	\end{lem}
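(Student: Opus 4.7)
The plan is to build $y$ so that it agrees with $f$ exactly on a large finite block containing $A$, while approximating $f$ elsewhere by a finitely-supported element. Before the main case I would dispose of two degenerate situations. If $A = \emptyset$ the greedy condition is vacuous, and the density $\overline{\text{span}(e_n)} = \mathbb{X}$ supplies $y \in \mathbb{X}_{fin}$ with $\Vert f - y\Vert \leq \varepsilon$. If $t := \min_{n \in A} \vert e_n^*(f)\vert = 0$, then greediness of $A$ for $f$ forces $\vert e_n^*(f)\vert = 0$ for every $n \notin A$, so $f = P_A(f) \in \mathbb{X}_{fin}$ and $y = f$ works.

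Assume now $t > 0$. Since $\tilde{f} \in c_0$, I would pick a finite $N \supset A$ such that $\vert e_n^*(f)\vert < t/2$ for every $n \notin N$. Using density, I would choose $u \in \mathbb{X}_{fin}$ with $\Vert f - u\Vert$ as small as needed, and set
\[ y := P_N(f) + P_{N^c}(u). \]
The element $y$ has finite support. For the norm estimate, writing $f - y = P_{N^c}(f - u) = (I - P_N)(f - u)$ and using boundedness of the finite-rank projection $P_N$ gives $\Vert f - y\Vert \leq (1 + \Vert P_N\Vert)\,\Vert f - u\Vert$, so choosing $u$ close enough to $f$ ensures $\Vert f - y\Vert \leq \varepsilon$.

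To verify that $A$ is a greedy set for $y$, I would split the coefficients of $y$ by location. On $N$ they coincide with those of $f$, so $\min_{n \in A}\vert e_n^*(y)\vert = t$, while $\vert e_n^*(y)\vert \leq t$ for $n \in N \setminus A$ by greediness of $A$ for $f$. For $n \notin N$ one has $\vert e_n^*(y)\vert = \vert e_n^*(u)\vert \leq \vert e_n^*(f)\vert + c_2 \Vert f - u\Vert < t/2 + c_2 \Vert f - u\Vert$, which is strictly less than $t$ once $\Vert f - u\Vert < t/(2c_2)$.

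The only bookkeeping step is to choose $u$ satisfying simultaneously the $\varepsilon$-closeness requirement and the threshold $\Vert f - u\Vert < t/(2c_2)$; both are linear constraints on $\Vert f - u\Vert$, so any sufficiently accurate density approximation meets them at once. I do not foresee a serious obstacle; the argument is essentially a combined approximation-and-thresholding estimate built on the $c_0$ decay of $\tilde{f}$ and the fact that the coordinate functionals are uniformly bounded in norm by $c_2$.
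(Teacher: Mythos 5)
Your argument is correct: the splice $y = P_N(f) + P_{N^c}(u)$ with $N \supset A$ chosen to control the tail of $\tilde f$ below $t/2$, combined with the pointwise estimate $\vert e_n^*(u)\vert \le \vert e_n^*(f)\vert + c_2\Vert f-u\Vert$ on $N^c$, does make $A$ a greedy set for $y$ once $\Vert f-u\Vert < t/(2c_2)$, and the norm bound $\Vert f-y\Vert \le (1+\Vert P_N\Vert)\Vert f-u\Vert$ handles the $\varepsilon$-approximation; the degenerate cases $A=\emptyset$ and $t=0$ are disposed of correctly (the latter via totality of $(e_n^*)$). The paper itself does not reprove this lemma but cites it from \cite{BB2}; your construction is the standard one and there is no gap.
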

	
	\begin{cor}\label{bb2}
		Assume that $\mathcal B$ is a $C_q$-quasi-greedy basis of a Banach space $\mathbb X$ for elements with finite support. Then, $\mathcal B$ is quasi-greedy for every $f\in\mathbb X$.
	\end{cor}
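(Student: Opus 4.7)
The plan is to prove Corollary \ref{bb2} by a direct density argument built around Lemma \ref{bb1}. Fix any $f \in \mathbb X$ and any $m \in \mathbb N$, and choose a greedy set $A$ of order $m$ for $f$ so that $\mathcal G_m(f) = P_A(f)$. The goal then reduces to establishing $\|f - P_A(f)\| \leq C_q \|f\|$, using only the finite-support version of quasi-greediness as an ingredient.

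The key step is to approximate $f$ by finitely supported vectors that preserve $A$ as a greedy set. By Lemma \ref{bb1}, for every $\varepsilon > 0$ there exists $y_\varepsilon \in \mathbb X_{fin}$ with $\|f - y_\varepsilon\| \leq \varepsilon$ such that $A$ is still a greedy set for $y_\varepsilon$. Since $|A| = m$, the set $A$ can be used as a greedy set of order $m$ for $y_\varepsilon$, so $P_A(y_\varepsilon)$ is a legitimate realization of $\mathcal G_m(y_\varepsilon)$. Applying the finite-support hypothesis gives
\[
\|y_\varepsilon - P_A(y_\varepsilon)\| \leq C_q \|y_\varepsilon\| \leq C_q(\|f\| + \varepsilon).
\]

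Next I would close the argument by the triangle inequality, invoking the (automatic) boundedness of the finite-rank projection $P_A$:
\[
\|f - P_A(f)\| \leq \|f - y_\varepsilon\| + \|y_\varepsilon - P_A(y_\varepsilon)\| + \|P_A(y_\varepsilon - f)\| \leq \varepsilon(1 + \|P_A\|) + C_q(\|f\| + \varepsilon).
\]
Here $\|P_A\|$ is finite because the coordinate functionals $e_n^*$ are uniformly bounded (Markushevich, semi-normalized); although it depends on the fixed set $A$, that dependence disappears upon letting $\varepsilon \to 0$, which yields $\|f - \mathcal G_m(f)\| \leq C_q \|f\|$. Since $f$ and $m$ were arbitrary, $\mathcal B$ is $C_q$-quasi-greedy on all of $\mathbb X$.

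I do not expect any serious obstacle: Lemma \ref{bb1} carries the entire burden of matching the greedy structure, and the rest is the triangle inequality plus continuity of the finite-rank projection. The only mildly delicate point worth checking is the cardinality bookkeeping, namely that a greedy set of size $m$ for $y_\varepsilon$ really does represent $\mathcal G_m(y_\varepsilon)$, so that the finite-support hypothesis applies with exactly the constant $C_q$ and no loss.
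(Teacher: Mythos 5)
Your proof is correct and is essentially identical to the paper's own argument: both invoke Lemma \ref{bb1} to produce a finitely supported approximant for which $A$ remains a greedy set, apply the finite-support quasi-greedy bound, control the error terms via the triangle inequality and the boundedness of the finite-rank projection $P_A$, and then let $\varepsilon \to 0$. No gap or divergence from the paper's route.
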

	\begin{proof}
		Take $f\in\mathbb X$ and $A$ a greedy set of $f$ with order $m$. By Lemma \ref{bb1}, there is $y\in\mathbb X_{fin}$ such that $\Vert f-y\Vert\leq \varepsilon$ for every $\varepsilon>0$ with $A$ a greedy set for $y$. Then
		
		\begin{eqnarray*}
			\Vert f-P_A(f)\Vert &=& \Vert f-y-P_A(f)+y-P_A(y)+P_A(y)\Vert\\
			&\leq& \Vert f-y\Vert + \Vert y-P_A(y)\Vert + \Vert P_A(f-y)\Vert\\
			&\leq& \varepsilon(1+\Vert P_A\Vert)+C_q\Vert y\Vert\\
			&\leq& \varepsilon(1+\Vert P_A\Vert)+C_q\Vert f-y\Vert+C_q\Vert f\Vert\\
			&\leq& \varepsilon(1+C_q+\Vert P_A\Vert)+C_q\Vert f\Vert.
		\end{eqnarray*}
		Taking $\varepsilon\rightarrow 0$, we obtain the result.
	\end{proof}
	
	%
	%\begin{cor}{\cite[Corollary 7.3]{BB2}}\label{bb2}
	%	Let $\mathcal B$ a basis for a Banach space $\mathbb X$. Assume that there is $C>0$ such that for all $f\in\mathbb X_{fin}$, if $A$ is a greedy set for $f$, then
	%	$$\Vert P_A(f)\Vert\leq C\Vert f\Vert.$$
	%	Then, for all $f\in\mathbb X$, if $A$ is a greedy set for $f$, 
	%	$$\Vert P_A(f)\Vert \leq C\Vert f\Vert.$$
	%\end{cor}
	
	\begin{cor}\label{bb3}
		Let $\mathcal B$ be a basis for a Banach space $\mathbb X$. If $\mathcal B$ is an almost-greedy basis for all $f\in\mathbb X_{fin}$ with constant $C_{al}$, then the basis is almost-greedy for every $f\in\mathbb X$ with the same constant.
	\end{cor}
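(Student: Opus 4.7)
The plan is to mirror the argument used for Corollary \ref{bb2} (the quasi-greedy case), using Lemma \ref{bb1} as the main density tool. Given $f\in\mathbb X$ and $m\in\mathbb N$, fix a greedy set $A$ of order $m$ for $f$, so that $\mathcal G_m(f)=P_A(f)$. To establish the almost-greedy inequality at $f$, I would fix an arbitrary set $B$ with $\vert B\vert\leq m$ and aim to show
\[
\Vert f-P_A(f)\Vert\leq C_{al}\Vert f-P_B(f)\Vert,
\]
which, since $B$ is arbitrary, yields the claim after taking the infimum.

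To do this, apply Lemma \ref{bb1} with a parameter $\varepsilon>0$: choose $y\in\mathbb X_{fin}$ with $\Vert f-y\Vert\leq\varepsilon$ such that $A$ remains a greedy set for $y$. The hypothesis gives $\Vert y-P_A(y)\Vert\leq C_{al}\Vert y-P_B(y)\Vert$. Then I would insert and remove $y$ on both sides: on the left, split
\[
\Vert f-P_A(f)\Vert\leq \Vert f-y\Vert+\Vert y-P_A(y)\Vert+\Vert P_A(f-y)\Vert\leq (1+\Vert P_A\Vert)\varepsilon+\Vert y-P_A(y)\Vert,
\]
and on the right,
\[
\Vert y-P_B(y)\Vert\leq \Vert y-f\Vert+\Vert f-P_B(f)\Vert+\Vert P_B(f-y)\Vert\leq (1+\Vert P_B\Vert)\varepsilon+\Vert f-P_B(f)\Vert.
\]
Combining these with the hypothesis yields
\[
\Vert f-P_A(f)\Vert\leq \bigl(1+\Vert P_A\Vert+C_{al}(1+\Vert P_B\Vert)\bigr)\varepsilon+C_{al}\Vert f-P_B(f)\Vert.
\]
Letting $\varepsilon\to 0$ (note that $A$ and $B$, and hence $\Vert P_A\Vert$ and $\Vert P_B\Vert$, are fixed finite-rank projections independent of $\varepsilon$) gives the required bound.

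The only delicate point is organizing the quantifiers: the approximating element $y$ is chosen after $B$ and $A$ are fixed, so the projection norms $\Vert P_A\Vert,\Vert P_B\Vert$ are finite constants that do not depend on $\varepsilon$, and the error terms genuinely vanish. This is essentially the same bookkeeping that appears in the proof of Corollary \ref{bb2}, so I do not anticipate any genuine obstacle; the argument is routine once Lemma \ref{bb1} is in hand.
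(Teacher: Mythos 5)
Your proof is correct and takes essentially the same approach as the paper: perturb $f$ to a finite-support element via Lemma \ref{bb1}, keeping the greedy set $A$, and transfer the almost-greedy bound back to $f$ by the triangle inequality, using that $\Vert P_A\Vert,\Vert P_B\Vert$ are finite. Your bookkeeping is in fact slightly cleaner than the paper's: by fixing an arbitrary competitor $B$ with $\vert B\vert\leq m$ first and taking the infimum over $B$ only after letting $\varepsilon\to 0$, you avoid the paper's implicit assumption that $\inf_{\vert B\vert\leq m}\Vert f-P_B(f)\Vert$ is attained by some $B_1$ (the basis for its case split), which need not hold when $f$ has infinite support.
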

	\begin{proof}
		Assume that $\mathcal B$ is almost-greedy for elements with finite support. Take $f\in\mathbb X$ with $A$ a greedy set of order $m$. Applying Lemma \ref{bb1}, for any $\varepsilon>0$, there is $g\in\mathbb X_{fin}$ such that $\Vert f-g\Vert\leq \varepsilon$  and $A$ a greedy set for $g$. Consider the set $B_1$ the set such that
		$$\inf_{\vert B\vert\leq m}\Vert f-P_B(f)\Vert=\Vert f-P_{B_1}(f)\Vert.$$
		
		\underline{Case 1:} $B_1=\emptyset$. 
		\begin{eqnarray*}
			\Vert f-P_A(f)\Vert &=& \Vert f-g+g-P_A(f)-P_A(g)+P_A(g)\Vert\\
			&\leq& \Vert f-g\Vert + \Vert g-P_A(g)\Vert + \Vert P_A(f-g)\Vert\\
			&\leq & \varepsilon(1 + \Vert P_A\Vert)+C_{al}\inf_{\vert B\vert\leq m}\Vert g-P_B(g)\Vert\\
			&\leq & \varepsilon(1 + \Vert P_A\Vert)+C_{al}\Vert g\Vert\\
			&\leq & \varepsilon(1 + \Vert P_A\Vert)+C_{al}\Vert f-g\Vert+C_{al}\Vert f\Vert\\	
			&\leq & \varepsilon(1+C_{al} + \Vert P_A\Vert)+C_{al}\Vert f\Vert.
		\end{eqnarray*}
		Taking $\varepsilon\rightarrow 0$, we obtain the result.

		\underline{Case 2:} $B_1\neq\emptyset$.
		\begin{eqnarray*}
			\Vert f-P_A(f)\Vert &=& \Vert f-g+g-P_A(f)-P_A(g)+P_A(g)\Vert\\
			&\leq& \Vert f-g\Vert + \Vert g-P_A(g)\Vert + \Vert P_A(f-g)\Vert\\
			&\leq & \varepsilon(1 + \Vert P_A\Vert)+C_{al}\inf_{\vert B\vert\leq m}\Vert g-P_B(g)\Vert\\
			&\leq & \varepsilon(1 + \Vert P_A\Vert)+C_{al}\Vert g-P_{B_1}(g)\Vert\\
			&\leq &  \varepsilon(1 + \Vert P_A\Vert) + C_{al}\Vert g-f+f-P_{B_1}(g)+P_{B_1}(f)-P_{B_1}(f)\Vert\\
			&\leq& \varepsilon(1 + \Vert P_A\Vert+C_{al})+C_{al}\Vert P_{B_1}(f-g)\Vert+\Vert f-P_{B_1}(f)\Vert\\
			&\leq& \varepsilon(1 + \Vert P_A\Vert+C_{al}+C_{al}\Vert P_{B_1}\Vert)+\Vert f-P_{B_1}(f)\Vert
		\end{eqnarray*}
		Taking $\varepsilon\rightarrow 0$, we obtain the result.
	\end{proof}
	
	With the same arguments, it is straightforward to show the next result.
	\begin{cor}\label{bb4}
		Let $\mathcal B$ be a basis for a Banach space $\mathbb X$. If $\mathcal B$ is a partially-greedy basis for all $f\in\mathbb X_{fin}$ with constant $C_{p}$, then the basis is partially-greedy for every $f\in\mathbb X$ with the same constant.
	\end{cor}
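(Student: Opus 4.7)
The plan is to mimic the density argument used in Corollary \ref{bb3}, substituting the partial sum projection $S_k$ for the arbitrary coordinate projection $P_B$. The key observation is that, although the partially-greedy inequality involves the family of partial sums rather than arbitrary projections, we may fix a single index $k\le m$ at the outset and argue for that $k$; taking the infimum over $k\le m$ at the very end recovers the full partial-greedy estimate.

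First, I would take $f\in\mathbb X$ and let $A$ be a greedy set of $f$ of order $m$. Fix any $k\le m$. By Lemma \ref{bb1}, for each $\varepsilon>0$ there exists $g\in\mathbb X_{fin}$ with $\Vert f-g\Vert\le\varepsilon$ such that $A$ remains a greedy set for $g$, i.e.\ $P_A(g)=\mathcal G_m(g)$. Since, by assumption, the partially-greedy inequality is valid on $\mathbb X_{fin}$, I may apply it to $g$ at the index $k$ to obtain
$$\Vert g-P_A(g)\Vert \le C_p\Vert g-S_k(g)\Vert.$$

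Next, I would split $\Vert f-P_A(f)\Vert$ and $\Vert g-S_k(g)\Vert$ through $g$ and $f$ respectively, exactly as in the proof of Corollary \ref{bb3}:
\begin{eqnarray*}
\Vert f-P_A(f)\Vert &\le& \Vert f-g\Vert+\Vert g-P_A(g)\Vert+\Vert P_A(f-g)\Vert\\
&\le& \varepsilon(1+\Vert P_A\Vert)+C_p\Vert g-S_k(g)\Vert,\\
\Vert g-S_k(g)\Vert &\le& \Vert g-f\Vert+\Vert f-S_k(f)\Vert+\Vert S_k(f-g)\Vert\\
&\le& \varepsilon(1+\Vert S_k\Vert)+\Vert f-S_k(f)\Vert.
\end{eqnarray*}
Here the continuity of $P_A$ and $S_k$ (as finite linear combinations of the bounded coordinate functionals $e_n^*$) is what makes the $\varepsilon$-terms legitimate; $k$ and $A$ are fixed, so $\Vert P_A\Vert$ and $\Vert S_k\Vert$ are finite constants. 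Combining the two estimates and letting $\varepsilon\to 0^+$ yields $\Vert f-P_A(f)\Vert\le C_p\Vert f-S_k(f)\Vert$. Since $k\le m$ was arbitrary, taking the infimum over $k\le m$ gives partially-greediness of $\mathcal B$ on all of $\mathbb X$ with the same constant $C_p$.

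I do not expect any serious obstacle: unlike the proof of Corollary \ref{bb3}, there is no need for a case split since $S_0=0$ is already a legitimate choice and the partial-greedy estimate is quantified uniformly over $k\le m$. The only mildly delicate point is ensuring that the coordinate and partial sum operators are bounded in norm for the density argument, but this is immediate from the semi-normalized Markushevich hypothesis on $\mathcal B$, since for each fixed $k\le m$ we have $\Vert S_k\Vert\le k\,c_2^2$ and $\Vert P_A\Vert\le |A|\,c_2^2$, both of which are finite constants independent of $\varepsilon$.
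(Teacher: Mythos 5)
Your proof is correct and follows the same density argument the paper invokes: the paper's ``proof'' of Corollary \ref{bb4} is simply the remark ``with the same arguments,'' pointing back to the proof of Corollary \ref{bb3}, and your write-up is exactly the required adaptation (using $S_k$ in place of $P_B$, with Lemma \ref{bb1} supplying the finitely supported approximant that keeps $A$ greedy). Your observation that fixing $k\le m$ at the outset avoids the case split of Corollary \ref{bb3} is a minor but genuine streamlining, and the boundedness estimates $\Vert P_A\Vert\le |A|\,c_2^2$, $\Vert S_k\Vert\le k\,c_2^2$ are the right justification for passing to the limit $\varepsilon\to 0$.
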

	
	\begin{lem}{\cite[Lemma 3.2]{BDKOW}}\label{sy}
		Let $\mathbb X$ be a Banach space. Suppose $D$ is a finite subset of $\mathbb N$, and $f\in\mathbb X\setminus\lbrace 0\rbrace$ satisfies $\text{supp}(f)\cap D=\emptyset$. Then, for any $\varepsilon>0$ there is $y\in\mathbb X_{fin}$ so that $\Vert f-y\Vert<\varepsilon$, $\text{supp}(y)\cap D=\emptyset$ and $\Vert\tilde{f}\Vert_{\infty}=\Vert\tilde{y}\Vert_\infty$.
	\end{lem}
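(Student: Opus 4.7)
The plan is to combine Lemma \ref{bb1} (a density result already invoked in the paper) with two elementary coordinate surgeries to enforce the conditions on $D$ and on the $\ell_\infty$-norm. As a preliminary observation, since $(e_n^*(f))_n\in c_0$ and $f\neq 0$, the value $M:=\Vert\tilde f\Vert_\infty>0$ is attained at some $n_0\in\text{supp}(f)$, which by hypothesis lies outside $D$; in particular, the singleton $\{n_0\}$ is a greedy set of order $1$ for $f$.

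Given $\varepsilon>0$, I would first fix $\delta>0$ small (to be pinned down at the end). Applying Lemma \ref{bb1} to the greedy set $\{n_0\}$ produces $y_0\in\mathbb X_{fin}$ with $\Vert f-y_0\Vert<\delta$ and $\{n_0\}$ still a greedy set of order $1$ for $y_0$, so that $|e_{n_0}^*(y_0)|\geq\max_{n\neq n_0}|e_n^*(y_0)|$. Next I would set $y_1:=y_0-P_D(y_0)$. Since $e_n^*(f)=0$ on $D$, one has $|e_n^*(y_0)|=|e_n^*(y_0-f)|\leq c_2\delta$ for every $n\in D$, which gives $\Vert y_1-y_0\Vert\leq |D|c_2^2\delta$ and hence $\Vert f-y_1\Vert\leq (1+|D|c_2^2)\delta$. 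Because $n_0\notin D$, the coefficient $e_{n_0}^*(y_1)=e_{n_0}^*(y_0)$ is unchanged, and removing the $D$-coordinates only shrinks the off-$\{n_0\}$ maximum, so $\{n_0\}$ remains a greedy set of order $1$ for $y_1$.

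For the final step I would rescale: take $\delta<M/(2c_2)$ so that $|e_{n_0}^*(y_1)|\geq M/2$ (using $\bigl||e_{n_0}^*(y_1)|-M\bigr|\leq c_2\delta$), set $\alpha:=M/|e_{n_0}^*(y_1)|$ and define $y:=\alpha y_1$. Then $|e_{n_0}^*(y)|=M$, and for any $n\neq n_0$,
$$|e_n^*(y)|=\alpha|e_n^*(y_1)|\leq \alpha|e_{n_0}^*(y_1)|=M,$$
so $\Vert\tilde y\Vert_\infty=M$; disjointness of $\text{supp}(y)$ from $D$ is inherited from $y_1$. It then remains to estimate $\Vert f-y\Vert\leq\Vert f-y_1\Vert+|1-\alpha|\,\Vert y_1\Vert$, in which $|1-\alpha|\leq 2c_2\delta/M$ and $\Vert y_1\Vert\leq\Vert f\Vert+1$ for $\delta$ small. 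The step that demands most vigilance is precisely this last estimate: one must check that every amplification constant depends only on the fixed data $M$, $\Vert f\Vert$, $|D|$ and $c_2$, and \emph{not} on the (uncontrolled) support size of $y_0$. That is where the hypothesis $f\neq 0$, guaranteeing $M>0$ and bounding the denominator in $\alpha$ uniformly away from zero, is used in earnest; with it in hand, any sufficiently small $\delta$ yields $\Vert f-y\Vert<\varepsilon$.
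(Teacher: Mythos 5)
Your proof is correct. Note first that the paper does not actually supply a proof of this statement: it is quoted verbatim as \cite[Lemma 3.2]{BDKOW}, so there is no internal proof to compare against. That said, your argument is a complete and self-contained derivation, and it is sound on every step: the maximum of $|\tilde f|$ is attained because $(e_n^*(f))_n\in c_0$ and $f\neq 0$; Lemma \ref{bb1} legitimately produces $y_0\in\mathbb X_{fin}$ with $\|f-y_0\|\leq\delta$ keeping $\{n_0\}$ a greedy set; the $D$-coordinates of $y_0$ are all $O(c_2\delta)$ because $e_n^*(f)=0$ on $D$, so zeroing them moves $y_0$ by at most $|D|c_2^2\delta$ and cannot promote any $n\in D$ above $n_0$; and the rescaling by $\alpha=M/|e_{n_0}^*(y_1)|$ pins $\Vert\tilde y\Vert_\infty=M$ exactly, with $|1-\alpha|=O(\delta)$ once $\delta<M/(2c_2)$. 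All amplification constants depend only on $M$, $\Vert f\Vert$, $|D|$, $c_2$, as you stress.

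One remark on economy: invoking Lemma \ref{bb1} is more machinery than is strictly needed here. Since only the position of the maximum matters for the rescaling, you could take any $g\in\mathbb X_{fin}$ with $\|f-g\|\leq\delta$ (density of $\mathbb X_{fin}$ suffices, no greedy-set preservation required), set $y_1:=g-P_D(g)$, observe that $\beta:=\Vert\tilde y_1\Vert_\infty$ satisfies $|\beta-M|\leq c_2(1+|D|c_2^2)\delta$ (lower bound from coordinate $n_0$, upper bound uniformly over all coordinates from $\|\tilde f\|_\infty=M$), and rescale by $M/\beta$. That avoids tracking where the sup sits. Your route is nevertheless perfectly valid and the reliance on Lemma \ref{bb1} is consistent with the toolkit the paper already deploys in Corollaries \ref{bb2}--\ref{bb4}.
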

	%
	
	%%%%%%%%%%%%%%%%%%%%%%%%%%%%%%%%%%%%%%%%%%
	\vspace{6pt}


\begin{thebibliography}{999}
\bibitem{AABW}\textsc{F. Albiac, J. L. Ansorena, P. M. Bern\'a, P. Wojtaszczyk},\textit{Greedy approximation for biorthogonal systems in quasi-Banach spaces}, Dissertatione Math. \textbf{560} (2021), 1--88.
\bibitem{B}\textsc{P. M. Bern\'a},\textit{Thresholding Greedy Algorithms in Banach spaces}, PhD Thesis. https://repositorio.uam.es/handle/10486/688914
\bibitem{BB2} \textsc{M. Berasategui, P. M. Bern\'a},
\emph{Quasi-greedy bases for sequence with gaps},
Nonlinear Analysis \textbf{208} (2021), 112294.
\bibitem{BBL} \textsc{M. Berasategui, P. M. Bern\'a, S. Lassalle},
\emph{Strong Partially Greedy Bases and Lebesgue-Type Inequalities},
Constr. Approx. (2021) https://doi.org/10.1007/s00365-021-09531-8
\bibitem{BB} \textsc{P. M. Bern\'a, Ó. Blasco},
\emph{Characterization of greedy bases in Banach spaces},
J. Approx. Theory, \textbf{217} (2017), 28-39.
\bibitem{BBG}
\textsc{P. M. Bern\'a, O. Blasco, G. Garrig\'os},
\emph{Lebesgue inequalities for greedy algorithm in general bases},
Rev. Mat. Complut.  {\bf 30} (2017), 369--392.
\bibitem{BDKOW} \textsc{P. M. Bern\'a, S. J. Dilworth, D. Kutzarova, T. Oikhberg, B. Wallis}, \emph{The weighted property (A) and the greedy algorithm}, J. Approx. Theory \textbf{248} (2019), 105300.
\bibitem{DKKT}
\textsc{S. J. Dilworth, N. J. Kalton, D. Kutzarova, and V. N.
	Temlyakov}, \emph{The Thresholding Greedy Algorithm, Greedy Bases,
	and Duality}, Constr. Approx. {\bf 19} (2003), 575--597.
\bibitem{DKOSS}\textsc{S.J. Dilworth,  D. Kutzarova, E. Odell, T. Schlumprecht, A. Zsak},\textit{Renorming spaces with greedy bases}, J. Approx. Theory \textbf{188} (2014),  39-56.	
\bibitem{GHO}
\textsc{G. Garrig\'os, E. Hern\'andez, T. Oikhberg},
\emph{Lebesgue-type inequalities for quasi-greedy bases}, Constr. Approx. 38 (3) (2013), 447--470.
\bibitem{VT}\textsc{S.V.Konyagin, V.N.Temlyakov},\textit{A remark on greedy approximation in Banach spaces}, East J. Approx. \textbf{5} (1999), 365-379.
\bibitem{Woj} \textsc{P.Wojtaszczyk},\textit{Greedy algorithm for general biorthogonal systems}, J.Approx.Theory \textbf{107} (2000), no.2, 293-314.
\end{thebibliography}
\end{document}